\documentclass[a4paper]{amsart}

\usepackage{latexsym,amssymb,amsmath,graphics}
\usepackage[dvips]{graphicx}

\textwidth      15.5cm
\oddsidemargin  1.2cm
\evensidemargin 0.9cm
\marginparwidth 1.9cm
\marginparsep   0.4cm
\marginparpush  0.4cm

\def\cc{{\mathcal C}}
\def\dd{{\mathcal D}}
\def\ee{{\mathcal E}}
\def\ff{{\mathcal F}}

\def\kk{{\mathcal K}}

\def\mm{{\mathcal M}}
\def\nn{{\mathcal N}}
\def\pp{{\mathcal P}}

\def\ss{{\mathcal S}}
\def\tt{{\mathcal T}}

\def\ux{\underline{x}}
\def\uy{\underline{y}}
%

\def\ie{i.e.\ }

\def\eps{\varepsilon}
\def\dst{\displaystyle}

\def\supp{{\mathrm{supp}\,}}
\def\span{{\mathrm{span}\,}}

\renewcommand{\Im}{\mathrm{Im}\,}

%
%

\def\C{{\mathbb{C}}}

\def\N{{\mathbb{N}}}

\def\R{{\mathbb{R}}}
\def\S{{\mathbb{S}}}

\def\d{\,{\mathrm{d}}}
%
%

\newcommand{\norm}[1]{{\left\|{#1}\right\|}}

\newcommand{\abs}[1]{{\left|{#1}\right|}}
\newcommand{\scal}[1]{{\left\langle{#1}\right\rangle}}

%
%

\newenvironment{definition}[1][]{\vskip3pt\noindent\sl\textbf{Definition.}\ }{\rm\vskip3pt}

\newtheorem{lemma}{Lemma}[section]
\newtheorem{proposition}[lemma]{Proposition}
\newtheorem{theorem}[lemma]{Theorem}
\newtheorem{corollary}[lemma]{Corollary}
\newtheorem{remarknum}[lemma]{Remark}

%
%
%
%
\date{\today}

\begin{document}
\title{Uncertainty principles for integral operators}
\author{Saifallah Ghobber}

\address{S. G\,: D\'epartement de Math\'ematiques Appliqu\'{e}es\\
Institut Pr\'{e}paratoire Aux  \'{E}tudes D'ing\'{e}nieurs de Nabeul\\
Universit\'e de Carthage\\
Campus Universitaire, Merazka\\ 8000, Nabeul\\
Tunisie}
\email{Saifallah.Ghobber@math.cnrs.fr}

\author{Philippe Jaming}

\address{P. J\,: Univ. Bordeaux, IMB, UMR 5251, F-33400 Talence, France.
CNRS, IMB, UMR 5251, F-33400 Talence, France.}
\email{Philippe.Jaming@u-bordeaux1.fr}

\begin{abstract}
The aim of this paper is to prove new uncertainty principles for an integral operator $\tt$ with a bounded kernel for which there is a Plancherel theorem.
The first of these results is an extension of  Faris's local uncertainty principle which states that if a nonzero function  $f\in L^2(\R^d,\mu)$ is highly
localized near a single point then  $\tt (f)$ cannot be concentrated  in  a set of finite measure.
The second result extends the Benedicks-Amrein-Berthier uncertainty principle and states that a nonzero function
$f\in L^2(\R^d,\mu)$ and its integral transform $\tt (f)$ cannot both have support of finite measure.
From these two results we deduce a global uncertainty principle of Heisenberg type for the transformation $\tt$.
We apply our results to obtain a new uncertainty principles for the Dunkl and Clifford Fourier transforms.
\end{abstract}

\subjclass{42A68;42C20}

\keywords{Uncertainty principles, annihilating pairs, Dunkl transform, Fourier-Clifford transform,  integral operators}

\maketitle


\section{Introduction}
Uncertainty principles are mathematical results that give limitations on the simultaneous concentration of a function
and its Fourier  transform. They  have implications in two main areas: quantum physics and signal analysis. In
quantum physics they tell us that a particle's speed and position cannot both be measured
with infinite precision. In signal analysis they tell us that if we observe a signal only for a
finite period of time, we will lose information about the frequencies the signal consists of.
There are many ways to get the statement about concentration precise. The most famous of them is the so called
Heisenberg Uncertainty Principle \cite{Heis}
where concentration is measured by dispersion and the Hardy Uncertainty Principle \cite{hardy}  where concentration is
measured in terms of fast decay. A little less known one consists in measuring concentration in terms of smallness of
support.
A considerable attention has been devoted recently to discovering new formulations and new contexts for the
uncertainty principle
({\it see}   the surveys  \cite{BD, folland} and the book \cite{HJ} for other forms of the uncertainty principle).

Our aim here is to consider uncertainty principles in which concentration is measured either
by (generalized) dispersion like in Heisenberg's uncertainty principle
or by the smallness of the support.
The transforms under consideration are
integral operators $\tt$ with polynomially bounded kernel $\kk$ and for which there is
a Plancherel Theorem
and include the usual Fourier transform, the Fourier-Bessel (Hankel) transform, the Fourier-Dunkl
transform and the Fourier-Clifford transform as particular cases.

\medskip

Let us now be more precise. Let $\Omega$, $\widehat{\Omega}$ be two convex cones in $\R^d$ ({\it i.e.}
$\lambda x\in\Omega$ if $\lambda>0$ and $x\in\Omega$) with non-empty interior. We endow them with  Borel measures
$\mu$ and $\widehat{\mu}$. The Lebesgue spaces $L^p(\Omega,\mu)$, $1\leq p \leq \infty$,
are then defined in the usual way.
We assume that the measure $\mu$ is absolutely continuous with respect to the Lebesgue measure and has a polar
decomposition of the form $\mbox{d}\mu(r\zeta)=r^{2a-1}\,\mbox{d}r\,Q(\zeta)\,\mbox{d}\sigma(\zeta)$
where $\mbox{d}\sigma$ is the Lebesgue measure on the unit sphere $\S^{d-1}$ of $\R^d$ and 
$Q\in L^1(\S^{d-1},\mbox{d}\sigma)$, $Q\not=0$.
Then $\mu$ is homogeneous of degree $2a$ in the following sense:
for every continuous function $f$ with compact support in $\Omega$ and every $\lambda>0$,
\begin{equation}
\label{eqmesure}
\int_{\Omega} f\left(\frac{x}{\lambda}\right)\d \mu(x)= \lambda^{2a}\int_{\Omega} f(x)\d \mu(x).
\end{equation}
We define $\widehat{a}$ accordingly for $\widehat{\mu}$ and assume that $\widehat{a}=a$.

Next, let $\kk:\Omega\times \widehat{\Omega}\longrightarrow \C$ be a kernel such that
\begin{enumerate}
	\item $\kk$ is continuous;
  \item $\kk$ is polynomially bounded: $|\kk(x,\xi)|\le c_{\tt}(1+|x|)^m(1+|\xi|)^{\widehat{m}}$;
  \item $\kk$ is homogeneous: $\kk(\lambda x,\xi)= \kk(x,\lambda \xi)$.
\end{enumerate}
One can then define the integral operator
$\tt$ on $\ss(\Omega)$ by
\begin{equation}\label{defop}
\tt(f)(\xi)=\int_{\Omega}f(x)\kk(x,\xi)\d \mu(x), \; \xi\in \widehat{\Omega}.
\end{equation}
For $\rho>0$, we define the measures $\d \mu_\rho(x)=(1+|x|)^\rho\d\mu(x)$ and
$\d \widehat{\mu}_\rho(\xi)=(1+|\xi|)^\rho\d\widehat{\mu}(\xi)$. Then $\tt$ extends into
a continuous operator from $L^1(\Omega,\mu_m)$ to
$$
\cc_{\widehat{m}}(\widehat{\Omega})=\left\{f\mbox{ continuous s.t.}  \norm{f}_{\infty,\widehat{m}} :=
\sup_{\xi\in\widehat{\Omega}}\frac{|f(\xi)|}{(1+|\xi|)^{ \widehat{m}}}<\infty\right\}.
$$

Further, if we introduce the dilation operators $\dd_{\lambda},\widehat{\dd}_{\lambda}$, $\lambda>0$:
$$
\dd_{\lambda} f(x) = \frac{1}{\lambda^{a}} f\left(\frac{x}{\lambda}\right), \quad
\widehat{\dd}_{\lambda} f(x) = \frac{1}{\lambda^{\widehat{a}}} f\left(\frac{x}{\lambda}\right),
$$
then the homogeneity of $\kk$ implies
\begin{equation}\label{dt}
\tt \dd_{\lambda}= \widehat{\dd}_{\frac{1}{\lambda}}\tt.
\end{equation}
Also, from the fact that $\mu,\widehat{\mu}$ are absolutely continuous with respect to the Lebesgue measure, these
dilation operators are continuous from $(0,\infty)\times L^2(\Omega,\mu_\rho)$ --resp.  $(0,\infty)\times
L^2(\widehat{\Omega},\widehat{\mu}_\rho)$-- to $L^2(\Omega,\mu_\rho)$ --resp.  $
L^2(\widehat{\Omega},\widehat{\mu}_\rho)$.

The integral operators under consideration will be assumed to satisfy some of
the following proprieties that are common for Fourier-like transforms:
\begin{enumerate}
\item $\tt$ has an {\bf Inversion Formula}: {\sl When both $f \in L^1(\Omega,\mu_m)$ and
$\tt(f)\in L^1(\widehat{\Omega},\widehat{\mu}_{\widehat{m}})$ we have $f\in \cc_m(\Omega)$ and}
$$
f(x)=\tt^{-1}[\tt(f)](x)=\int_{\widehat{\Omega}} \tt(f)(\xi)\overline{\kk(x,\xi)}\d \widehat{\mu}(\xi), \; x\in \Omega.
$$

\item $\tt$ satisfies {\bf Plancherel's Theorem}: {\sl for every $f\in\ss(\Omega)$, $\norm{\tt(f)}_{L^2(\widehat{\Omega},\widehat{\mu})}=\norm{f}_{L^2(\Omega,\mu)}$.
In particular, $\tt$ extends uniquely to a unitary transform from $L^2(\Omega,\mu)$ onto $L^2(\widehat{\Omega},\widehat{\mu})$.}\footnote{This condition implies $\widehat{a}=a$.}


\end{enumerate}

This family of transforms include for instance the Fourier transform and the Fourier-Dunkl transform. We will also slightly relax the conditions to include the Fourier-Clifford transform.
We will here concentrate on uncertainty principles where concentration is measured in terms of dispersion or in terms
of smallness of support. Our first result will be the following local uncertainty principle
that we state here in the case $m=\widehat{m}=0$ for simplicity:

\medskip

\noindent{\bf Theorem A.}\\
{\sl Assume $m=\widehat{m}=0$.
Let $\Sigma \subset \widehat{\Omega}$ be a measurable subset of finite measure $0<\widehat{\mu}(\Sigma)<\infty$.
Then
\begin{enumerate}
  \item if $\,0 <s <a$, there is a constant $C$ such that for all $f\in L^2(\Omega,\mu)$,
 \begin{equation*}
\norm{\tt(f)}_{L^2(\Sigma,\widehat{\mu})} \leq
C\Big[\widehat{\mu}(\Sigma)\Big]^{\frac{s}{2a}}\big\|\abs{x}^s f\big\|_{L^2(\Omega,\mu)};
\end{equation*}
\item if $\,s >a$, there is a constant $C$ such that for all $f\in L^2(\Omega,\mu)$,
\begin{equation*}
\norm{\tt(f)}_{L^2(\Sigma,\widehat{\mu})} \leq
C\Big[\widehat{\mu}(\Sigma)\Big]^{\frac{1}{2}}\big\|f\big\|_{L^2(\Omega,\mu)}^{1-\frac{a}{s}}
\big\|\abs{x}^s f\big\|_{L^2(\Omega,\mu)}^{\frac{a}{s}}.
\end{equation*}
\end{enumerate}
}

\medskip

This theorem implies that if $f$ is highly localized in the  neighborhood of $0$,
 \ie  the dispersion  $\big\|\abs{x}^s f\big\|_{L^2(\Omega,\mu)}$ takes a small  value, then  $\tt(f)$ cannot be concentrated in a subset $\Sigma$ of finite measure.
We can refer to \cite{Fa, Pr, Prr, Prrr} for the history of these uncertainty inequalities.

Another uncertainty principle which is of particular interest is:
{\sl a function $f$  and its integral transform $\tt(f)$ cannot both have small support}.
 In other words we are interested in the following adaptation of a well-known notion from Fourier analysis:

\begin{definition}\ \\
\label{num}
Let $S\subset\Omega$, $\Sigma\subset\widehat{\Omega}$ be two measurable subsets. Then
\begin{itemize}
\item $(S,\Sigma)$ is a \emph{weak annihilating pair}\footnote{see also the very similar notion of Heisenberg uniqueness pairs \cite{HMR}.} if, $\supp f \subset S$  and $\supp \tt(f)\subset \Sigma$ implies $f=0$.
\item $(S,\Sigma)$ is called a \emph{strong annihilating pair} if there exists $C=C(S,\Sigma)$ such that for every $f\in L^2(\Omega,\mu)$
\begin{equation}\label{int,str}
\norm{f}^2_{L^2(\Omega,\mu)}\leq C \Big(\norm{f}^2_{L^2(S^c,\mu)} +\norm{\tt(f)}^2_{L^2(\Sigma^c,\widehat{\mu})} \Big),
\end{equation}
\end{itemize}
where $A^c$ is the complementary of the set $A$ in $\Omega$ or $\widehat{\Omega}$. The constant $C(S,\Sigma)$  will be called the annihilation constant of $(S,\Sigma)$.
\end{definition}
Of course, every strong annihilating pair is also a weak one.
To prove that a pair $(S,\Sigma)$ is a strong annihilating pair, it is enough to shows that there exists a constant 
$D(S,\Sigma)$ such that for all $f \in L^2(\Omega,\mu)$ supported in $S$
\begin{equation}\label{defst2}
\norm{f}^2_{L^2(\Omega,\mu)}\leq D(S,\Sigma)\norm{\tt(f)}^2_{L^2(\Sigma^c,\widehat{\mu})}.
\end{equation}

 The qualitative (or weak) uncertainty principle has been considered in various places
\cite{al, Be, cps, ekk, ho, kan, k, ps}.
Our main concern here is the quantitative (or strong) uncertainty principles of the form  \eqref{int,str}.
In his paper \cite{dj}, de Jeu proved a
quite general uncertainty principle for integral operators with bounded transform. This result states that if $S,\Sigma$ are sets with sufficiently small measure,
 then $(S,\Sigma)$ is a strong annihilating pair.
One is thus lead to ask whether any pair of sets of finite measure is strongly annihilating.

In the case of the Fourier transform, this was proved
by Amrein-Berthier \cite{AB} (while the weak counter-part was proved by Benedicks \cite{Be}).
It is interesting to note that, when $f\in L^2(\R^d)$ the optimal estimate of $C$,
which depends only on Lebesgue's measures $|S|$ and $|\Sigma|$, was obtained by F. Nazarov \cite{Na} ($d=1$), while in
higher dimension the question is not fully settled unless either $S$ or $\Sigma$
is convex ({\it see}  \cite{JA} for the best result today). For the Fourier-Bessel/Hankel transform, this was done by
the authors in \cite{GJ}.
Our main result will be the following adaptation of the Benedicks-Amrein-Berthier uncertainty principle:

\medskip

\noindent{\bf Theorem B.}\\
{\sl Let $S\subset\Omega$, $\Sigma\subset\widehat{\Omega}$ be a pair of measurable subsets with $0<\mu_{2m}(S),\widehat{\mu}_{2\widehat{m}}(\Sigma)<\infty$.
Then there exists a constant $C(S, \Sigma)$ such that for any function $f\in L^2(\Omega,\mu)$,
\begin{equation*}
\norm{f}^2_{L^2(\Omega,\mu)}\leq C(S, \Sigma)\Big(\norm{f}^2_{L^2(S^c,\mu)}
 +\norm{\tt(f)}^2_{L^2(\Sigma^c,\widehat{\mu})} \Big).
\end{equation*}
}
For the Fourier transform the proof of this theorem  in  stated in \cite{AB} where the translation and the modulation operators plays a key role. Our
theorem include essentially  integral operators for which the translation operator is not explicit (the Dunkl transform for example) or does not behave like
 the ordinary translation (the Fourier-Bessel transform for example).
To do so we will replace translation by dilation and use the fact that the dilates of a $\cc_0$-function are linearly independent ({\it see} Lemma \ref{libre}).

Finally, from either Theorem $A$ or Theorem $B$ we will deduce the following global uncertainty inequality:

\medskip

\noindent{\bf Theorem C.}\\
{\sl For $s,\;\beta>0$, there exists a constant $C_{s,\beta}$ such that for all $f\in L^2(\Omega,\mu)$,
$$
\big\||x|^{s}f\big\|^{\frac{2\beta}{s+\beta}}_{L^2(\Omega,\mu)} \; \big\||\xi|^{\beta}\tt(f)\big\|^{\frac{2s}{s+\beta}}_{L^2(\widehat{\Omega},\widehat{\mu})}
\ge C_{s,\beta} \norm{f}^{2}_{L^2(\Omega,\mu)}.
$$
}

In particular when $s=\beta=1$ we obtain a Heisenberg uncertainty principle type for the transformation $\tt$.

\medskip

The structure of the paper is as follows: in the next section we will prove the local uncertainty inequality for the transformation $\tt$.
Section $3$ is devoted to our Benedicks-Amrein-Berthier type theorem and in Section $4$ we apply our results
for the Dunkl  and the Clifford Fourier transforms.

\medskip

\subsection*{Notation} Throughout this paper we denote by $\scal{.,.}$
the usual Euclidean inner product in $\R^d$, we write for $x \in \R^d$,
$\abs{x}=\sqrt{\scal{x,x}}$ and if $S$ is a measurable subset in $\R^d$, we will write $|S|$ for its Lebesgue measure.

Finally, $\S^{d-1}$ is the unit sphere on $\R^d$ endowed with the normalized surface measure $\d \sigma$.

We will write $c(\tt)$ (resp. $c(s,\tt)$...) for a constant that depends on the parameters $a,m,\hat m$ and $c_\tt$
defined above (resp. to indicate the dependence on some other parameter $s$...).
This constants may change from line to line.

\section{Local uncertainty principle}
Local uncertainty inequalities for the Fourier transform were firstly obtained by Faris \cite{Fa}, and they were subsequently sharpened and generalized
by Price and Sitaram \cite{Pr, Prr}. Similar inequalities on Lie groups of polynomial growth were established by
Ciatti, Ricci and Sundari in \cite{crs} which is based on \cite{Prrr} and further extended in \cite{Mar}

First from the polar decomposition of our measure we remark that
\begin{equation}\label{eqlocfini}
 \begin{cases}
\dst C_1(s):=\int_{\Omega\cap\{|x|\leq 1\}}\frac{\d\mu(x)}{|x|^{2s}}<\infty, & 0<s<a;\\
\dst C_2(s):=\int_{\Omega}\frac{\d\mu(x)}{(1+|x|)^{2s}}<\infty, &  s>a.
\end{cases}
 \end{equation}

\begin{theorem}\ \\
Let $\Sigma \subset \widehat{\Omega}$ be a measurable subset of finite measure $0<\widehat{\mu}_{2\widehat{m}}(\Sigma)<\infty$.
Then
\label{prop:local.}
\begin{enumerate}
  \item if $\,0 <s <a$, there is a constant $c(s,\tt)$ such that for all $f\in L^2(\Omega,\mu)$,
 \begin{equation}\label{eq1}
\norm{\tt(f)}_{L^2(\Sigma,\widehat{\mu})} \leq
\begin{cases} c(s,\tt)\Big[\widehat{\mu}_{2\widehat{m}}(\Sigma)\Big]^{\frac{s}{2(a+m)}}\big\|\abs{x}^s f\big\|_{L^2(\Omega,\mu)},&\mbox{if }\widehat{\mu}_{2\widehat{m}}(\Sigma)\leq 1;\\
c(s,\tt)\Big[\widehat{\mu}_{2\widehat{m}}(\Sigma)\Big]^{\frac{s}{2a}}\big\|\abs{x}^s f\big\|_{L^2(\Omega,\mu)},&\mbox{if }\widehat{\mu}_{2\widehat{m}}(\Sigma)> 1;
\end{cases}
\end{equation}
\item if $\,a\leq s\leq a+m$ then, for every $\eps>0$ there is a constant $c(s,\tt,\eps)$
such that for all $f\in L^2(\Omega,\mu)$,
$$
\norm{\tt(f)}_{L^2(\Sigma,\widehat{\mu})}
\leq
\begin{cases}
c(s,\tt,\eps)\Big[\widehat{\mu}_{2\widehat{m}}(\Sigma)\Big]^{\frac{1}{2(1+m/a)}-\eps}
\big\|f\big\|_{L^2(\Omega,\mu)}^{1-\frac{a}{s}+\eps}
\big\|\abs{x}^s f\big\|_{L^2(\Omega,\mu)}^{\frac{a}{s}-\eps},
&\mbox{if }\widehat{\mu}_{2\widehat{m}}(\Sigma)\le 1;\\ c(s,\tt,\eps)\Big[\widehat{\mu}_{2\widehat{m}}(\Sigma)\Big]^{\frac{1}{2}-\eps}
\big\|f\big\|_{L^2(\Omega,\mu)}^{1-\frac{a}{s}+\eps}
\big\|\abs{x}^s f\big\|_{L^2(\Omega,\mu)}^{\frac{a}{s}-\eps}, &\mbox{if }\widehat{\mu}_{2\widehat{m}}(\Sigma)> 1;
\end{cases}
$$
\item if $\,s >m+ a$, there is a constant $c(s,\tt)$ such that for all $f\in L^2(\Omega,\mu)$,
\begin{equation}\label{eq2}
\norm{\tt(f)}_{L^2(\Sigma,\widehat{\mu})} \leq
\begin{cases} c(s,\tt)\Big[\widehat{\mu}_{2\widehat{m}}(\Sigma)\Big]^{\frac{1}{2}}\big\|f\big\|_{L^2(\Omega,\mu)}^{1-\frac{a}{s}}
\big\|\abs{x}^s f\big\|_{L^2(\Omega,\mu)}^{\frac{a}{s}},&\mbox{if }m=0;\\
c(s,\tt)\Big[\widehat{\mu}_{2\widehat{m}}(\Sigma)\Big]^{\frac{1}{2}}\big\|f\big\|_{L^2(\Omega,\mu_{2s})},&\mbox{otherwise}.\end{cases}
\end{equation}
\end{enumerate}
\end{theorem}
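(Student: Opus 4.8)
The plan rests on one spatial decomposition of $f$ together with the two estimates the hypotheses make available for its pieces. Fix a radius $R>0$ and split $f=f_0+f_1$ with $f_0=f\chi_{\{\abs{x}\le R\}}$ and $f_1=f\chi_{\{\abs{x}>R\}}$. Since $\tt$ maps $L^1(\Omega,\mu_m)$ continuously into $\cc_{\widehat m}(\widehat\Omega)$, we have $\abs{\tt(f_0)(\xi)}\le c(\tt)(1+\abs{\xi})^{\widehat m}\norm{f_0}_{L^1(\Omega,\mu_m)}$; squaring and integrating over $\Sigma$ against $\widehat\mu$ gives
\[
\norm{\tt(f_0)}_{L^2(\Sigma,\widehat\mu)}\le c(\tt)\big[\widehat\mu_{2\widehat m}(\Sigma)\big]^{1/2}\norm{f_0}_{L^1(\Omega,\mu_m)}.
\]
For the high part, Plancherel's theorem together with $\abs{x}>R$ on $\supp f_1$ yields $\norm{\tt(f_1)}_{L^2(\Sigma,\widehat\mu)}\le\norm{f_1}_{L^2(\Omega,\mu)}\le R^{-s}\norm{\abs{x}^sf}_{L^2(\Omega,\mu)}$. (The two images add legitimately because $f_0\in L^1(\Omega,\mu_m)\cap L^2(\Omega,\mu)$, so its pointwise transform coincides with the unitary extension.) Everything is thus reduced to estimating $\norm{f_0}_{L^1(\Omega,\mu_m)}$ and optimizing in $R$; we may assume $\norm{\abs{x}^sf}_{L^2(\Omega,\mu)}<\infty$, else the inequalities are trivial.

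For part (1), $0<s<a$, I would bound $\norm{f_0}_{L^1(\mu_m)}$ by Cauchy--Schwarz after inserting the weight $\abs{x}^{-s}(1+\abs{x})^{m}$, which extracts $\norm{\abs{x}^sf}_{L^2(\mu)}$ and leaves $\int_{\{\abs{x}\le R\}}\abs{x}^{-2s}(1+\abs{x})^{2m}\d\mu$. By the polar decomposition this integral converges exactly because $s<a$ (this is the content of \eqref{eqlocfini}), and it grows like $R^{2(a-s)}$ for $R\le1$ and like $R^{2(a+m-s)}$ for $R>1$, the two rates coming from $(1+\abs{x})^{2m}\sim1$ near $0$ and $\sim\abs{x}^{2m}$ at infinity. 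The master bound becomes $c\big([\widehat\mu_{2\widehat m}(\Sigma)]^{1/2}R^{a-s}+R^{-s}\big)\norm{\abs{x}^sf}_{L^2(\mu)}$, resp.\ with $R^{a+m-s}$; balancing the two terms forces $R\sim[\widehat\mu_{2\widehat m}(\Sigma)]^{-1/(2a)}\le1$ precisely when $\widehat\mu_{2\widehat m}(\Sigma)\ge1$, producing the exponent $s/(2a)$, and $R\sim[\widehat\mu_{2\widehat m}(\Sigma)]^{-1/(2(a+m))}>1$ precisely when $\widehat\mu_{2\widehat m}(\Sigma)<1$, producing $s/(2(a+m))$. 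This is exactly the dichotomy of \eqref{eq1}.

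For part (3), $s>a+m$, the weight $\abs{x}^{-s}$ is no longer locally $\mu$-integrable, so I move it to the infinite range. If $m\ne0$ no splitting is needed: taking $R=\infty$ and factoring $(1+\abs{x})^{m}=(1+\abs{x})^{s}(1+\abs{x})^{m-s}$, Cauchy--Schwarz gives $\norm{f}_{L^1(\mu_m)}\le C_2(s-m)^{1/2}\norm{f}_{L^2(\mu_{2s})}$, with $C_2(s-m)=\int_\Omega(1+\abs{x})^{-2(s-m)}\d\mu<\infty$ by \eqref{eqlocfini} since $s-m>a$; this is the second line of \eqref{eq2}, with the clean power $1/2$ on $\widehat\mu_{2\widehat m}(\Sigma)$. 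If $m=0$ I keep the split: the low part gives $\mu(\{\abs{x}\le R\})^{1/2}\norm f\sim R^{a}\norm f$ and the high part $R^{a-s}\norm{\abs{x}^sf}$ (using $\int_{\{\abs{x}>R\}}\abs{x}^{-2s}\d\mu\sim R^{2(a-s)}$, convergent since $s>a$); optimizing in $R$ now balances $\norm f$ against $\norm{\abs{x}^sf}$ and yields the interpolation powers $1-a/s$ and $a/s$, again with exponent $1/2$ on $\widehat\mu_{2\widehat m}(\Sigma)$, i.e.\ the first line of \eqref{eq2}.

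Part (2), $a\le s\le a+m$, is the delicate case and the main obstacle. Here $\abs{x}^{-s}$ again fails to be locally integrable, so I would rerun the argument of part (1) with a subcritical exponent $\sigma<a$ in place of $s$ and then restore $s$ through the elementary interpolation $\norm{\abs{x}^\sigma f}_{L^2(\mu)}\le\norm{f}_{L^2(\mu)}^{1-\sigma/s}\norm{\abs{x}^sf}_{L^2(\mu)}^{\sigma/s}$ (Hölder with exponents $s/\sigma$ and $s/(s-\sigma)$), folding it into the radius optimization. Letting $\sigma\nearrow a$ drives the three exponents $\big(\tfrac{\sigma}{2(a+m)}$ or $\tfrac{\sigma}{2a},\,1-\tfrac{\sigma}{s},\,\tfrac{\sigma}{s}\big)$ to their nominal values $\big(\tfrac1{2(1+m/a)}$ or $\tfrac12,\,1-\tfrac as,\,\tfrac as\big)$, the gaps being $O(a-\sigma)$. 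The difficulty is that the single parameter $\sigma$ couples the power of $\widehat\mu_{2\widehat m}(\Sigma)$ to the interpolation weights, so one cannot match all three exponents to the clean form simultaneously; I would therefore optimize the pair $(R,\sigma)$ jointly and make a regime-dependent near-critical choice in each of $\widehat\mu_{2\widehat m}(\Sigma)\le1$ and $\widehat\mu_{2\widehat m}(\Sigma)>1$, the unattainable endpoint $\sigma=a$ being exactly what forces the loss $\eps$ and the $\eps$-dependent constant $c(s,\tt,\eps)$.
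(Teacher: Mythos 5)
Your proposal is correct and follows essentially the same route as the paper: the same low/high splitting of $f$ at a radius, the same $L^1(\Omega,\mu_m)\to\cc_{\widehat m}(\widehat\Omega)$ bound combined with Plancherel's theorem, the same regime-dependent choice of the radius in part (1), the same reduction of part (2) to part (1) via H\"older interpolation with a subcritical exponent $\sigma<a$, and the same weighted Cauchy--Schwarz estimate of $\norm{f}_{L^1(\Omega,\mu_m)}$ in part (3). The only cosmetic difference is in part (3) when $m=0$: you optimize the splitting radius directly, while the paper replaces $f$ by its dilate $\dd_\lambda f$ and minimizes over $\lambda$; both yield the same interpolation bound $\norm{f}_{L^1(\Omega,\mu)}\lesssim \norm{f}_{L^2(\Omega,\mu)}^{1-a/s}\norm{\abs{x}^s f}_{L^2(\Omega,\mu)}^{a/s}$.
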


\begin{proof}
As for the first part take $r>0$ and
let $\chi_r= \chi_{\Omega\cap\{|x|\leq r\}}$  and $\tilde{\chi_r}=1- \chi_r$. We may then write
$$
\norm{\tt(f)}_{L^2(\Sigma,\widehat{\mu})} =\norm{\tt(f)\chi_\Sigma}_{L^2(\widehat{\Omega},\widehat{\mu})}
\leq \norm{\tt(f\chi_r)\chi_\Sigma}_{L^2(\widehat{\Omega},\widehat{\mu})}+\norm{\tt(f\tilde{\chi_r})}_{L^2(\widehat{\Omega},\widehat{\mu})},
$$
hence, it follows from Plancherel's theorem  that
$$
\norm{\tt(f)}_{L^2(\Sigma,\widehat{\mu})}
\leq \widehat{\mu}_{2\widehat{m}}(\Sigma)^{1/2}\norm{\tt(f\chi_r)}_{\infty,\widehat{m}} + \norm{f\tilde{\chi_r}}_{L^2(\Omega,\mu)}.
$$
Now  we have
\begin{eqnarray*}
\norm{\tt(f \chi_r)}_{\infty,\widehat{m}}&\leq& c_{\tt}\norm{f\chi_r}_{L^1(\Omega,\mu_m)}
\leq c_{\tt}\bigl\|\abs{x}^{-s}(1+|x|)^m\chi_r\bigr\|_{L^2(\Omega,\mu)} \bigl\|\abs{x}^{s}f\bigl\|_{L^2(\Omega,\mu)}\\
&\le&c_{\tt} \sqrt{C_1(s)}\, (1+r)^mr^{a-s}\big\|\abs{x}^{s}f\big\|_{L^2(\Omega,\mu)}.
\end{eqnarray*}
On the other hand,
$$
\norm{f\tilde{\chi_r}}_{L^2(\Omega,\mu)}
\leq \big\|\abs{x} ^{-s}\tilde{\chi_r}\big\|_{L^\infty(\Omega,\mu)} \big\|\abs{x}^{s}f\big\|_{L^2(\Omega,\mu)}
= r^{-s}\big\|\abs{x}^{s}f\big\|_{L^2(\Omega,\mu)},
$$
so that
$$
\norm{\tt(f)}_{L^2(\Sigma,\widehat{\mu})}
\leq \Big( r^{-s}+ 2c_{\tt} \sqrt{C_1(s)}\, (1+r)^mr^{a-s}\widehat{\mu}_{2\widehat{m}}(\Sigma)^{1/2}\Big)\big\|\abs{x}^{s}f\big\|_{L^2(\Omega,\mu)}.
$$
If $\widehat{\mu}_{2\widehat{ m}}(\Sigma)>1$ we take $r=\widehat{\mu}_{2 \widehat{ m}}(\Sigma)^{-1/2a}<1$ (thus $(1+r)^m\leq 2^m$)
to obtain that there is a constant $C$ depending only on $s$ and $\tt$ such that
$$
\norm{\tt(f)}_{L^2(\Sigma,\widehat{\mu})}
\leq C\widehat{\mu}_{2\widehat{m}}(\Sigma)^{s/2a}
\big\|\abs{x}^{s}f\big\|_{L^2(\Omega,\mu)}.
$$
If $\widehat{\mu}_{2\widehat{ m}}(\Sigma)<1$ we take $r=\widehat{\mu}_{2\widehat{ m}}(\Sigma)^{-1/2(a+m)}>1$ (thus $(1+r)^m\leq 2^mr^m$)
to obtain that there is a constant $C$ depending only on $s$ and $\tt$ such that
$$
\norm{\tt(f)}_{L^2(\Sigma,\widehat{\mu})}
\leq C\widehat{\mu}_{2\widehat{m}}(\Sigma)^{s/2(a+m)}
\big\|\abs{x}^{s}f\big\|_{L^2(\Omega,\mu)}.
$$

\smallskip

Next, take $0<\sigma<a\leq s\leq a+m$, apply the first part with $\sigma$ replacing $s$ and then apply the classical
inequality
 \begin{equation*}\label{eq:classineq}
\norm{|x|^\sigma f}_{L^2(\Omega,\mu)}\leq C(\sigma,s)\norm{f}_{L^2(\Omega,\mu)}^{1-\frac{\sigma}{s}}
\norm{|x|^s f}_{L^2(\Omega,\mu)}^{\frac{\sigma}{s}}.
\end{equation*}

\smallskip

As for the last part we write
$$
\norm{\tt(f)}_{L^2(\Sigma,\widehat{\mu})}\leq\widehat{\mu}_{2\widehat{m}}(\Sigma)^{1/2}
\norm{\tt(f)}_{\infty,\widehat{m}}
\leq c_{\tt} \widehat{\mu}_{2\widehat{m}}(\Sigma)^{1/2}\norm{f}_{L^1(\Omega,\mu_m)}.
$$
Moreover
\begin{eqnarray*}
\norm{f}^2_{L^1(\Omega,\mu_m)}&=&
 \left(\int_{\Omega} (1+\abs{x})^{m}
|f(x)|\d \mu(x)\right)^2 \\&=&
 \left(\int_{\Omega} (1+\abs{x})^{-(s-m)}(1+\abs{x})^{s}
|f(x)|\d \mu(x)\right)^2  \\
&\leq&C_2(s-m)\int_{\Omega}(1+\abs{x})^{2s}|f(x)|^2\d \mu(x).
\end{eqnarray*}
Further, if $m=0$, then this last inequality implies
$$
\norm{f}^2_{L^1(\Omega,\mu)}\leq 2^{2s}C_2(s)\Big( \norm{f}^2_{L^2(\Omega,\mu)}+\big\|\abs{x}^{s}f\big\|^2_{L^2(\Omega,\mu)} \Big).
$$
Replacing $f$ by $ \dd_\lambda f$, $\lambda>0$ in this inequality, gives
$$
\norm{f}^2_{L^1(\Omega,\mu)} \leq 2^{2s}C_2(s)
\left(\lambda^{-2a} \big\|f\big\|^2_{L^2(\Omega,\mu)}+\lambda^{2(s-a)}\big\|\abs{x}^{s}f\big\|^2_{L^2(\Omega,\mu)} \right).
$$
Minimizing the right hand side of that inequality over $\lambda > 0$, we obtain the desired result.
\end{proof}
%

We now show that local uncertainty principle implies a global uncertainty principle type for $\tt$.
For sake of simplicity, we will assume that $m=\widehat{m}=0$. The general case will be treated in the next section.

\begin{corollary}\ \\
Assume that $m=\widehat{m}=0$.
For $s,\;\beta>0$, $s\not=a$ there exists a constant $C= C(s,\beta,\tt)$ such that for all $f\in L^2(\Omega,\mu)$,
\begin{equation}
\label{eqhe.}
\big\||x|^{s}f\big\|^{\frac{\beta}{s+\beta}}_{L^2(\Omega,\mu)} \;
\big\||\xi|^{\beta}\tt(f)\big\|^{\frac{s}{s+\beta}}_{L^2(\widehat{\Omega},\widehat{\mu})}
\ge C \norm{f}_{L^2(\Omega,\mu)}.
\end{equation}
\end{corollary}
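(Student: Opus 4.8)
The plan is to derive \eqref{eqhe.} from the local uncertainty principle of Theorem~\ref{prop:local.} by the standard splitting-and-optimization scheme, taking for the localizing set the truncated cone $\Sigma_R:=\widehat\Omega\cap\{|\xi|\le R\}$ and then optimizing over the radius $R>0$. We may assume $f\neq0$ and that $\big\||x|^sf\big\|_{L^2(\Omega,\mu)}$ and $\big\||\xi|^\beta\tt(f)\big\|_{L^2(\widehat\Omega,\widehat\mu)}$ are both finite, since otherwise there is nothing to prove.

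First I would compute the measure of the ball. From the polar decomposition of $\widehat\mu$ (equivalently, from its homogeneity of degree $2a$, cf.\ \eqref{eqmesure}) one gets $\widehat\mu_{2\widehat m}(\Sigma_R)=\widehat\mu(\Sigma_R)=\kappa R^{2a}$ with $\kappa:=\widehat\mu(\Sigma_1)\in(0,\infty)$ (recall $\widehat m=0$); in particular $\Sigma_R$ has finite measure, so Theorem~\ref{prop:local.} applies. Next, by Plancherel's theorem I split
$$
\norm{f}_{L^2(\Omega,\mu)}^2=\norm{\tt(f)}_{L^2(\widehat\Omega,\widehat\mu)}^2=\norm{\tt(f)}_{L^2(\Sigma_R,\widehat\mu)}^2+\norm{\tt(f)}_{L^2(\Sigma_R^c,\widehat\mu)}^2,
$$
and bound the tail trivially, $\norm{\tt(f)}_{L^2(\Sigma_R^c,\widehat\mu)}^2\le R^{-2\beta}\big\||\xi|^\beta\tt(f)\big\|_{L^2(\widehat\Omega,\widehat\mu)}^2$, because $|\xi|>R$ on $\Sigma_R^c$.

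For $0<s<a$ I would estimate the low-frequency part by part~(1) of Theorem~\ref{prop:local.}; since $m=0$ the two regimes $\widehat\mu(\Sigma)\le1$ and $\widehat\mu(\Sigma)>1$ share the exponent $s/2a$, so no case distinction is needed and one obtains $\norm{\tt(f)}_{L^2(\Sigma_R,\widehat\mu)}\le c(s,\tt)(\kappa R^{2a})^{s/2a}\big\||x|^sf\big\|_{L^2(\Omega,\mu)}$. Inserting the two bounds into the split gives $\norm{f}^2\le AR^{2s}+BR^{-2\beta}$ with $A\asymp\big\||x|^sf\big\|^2$ and $B=\big\||\xi|^\beta\tt(f)\big\|^2$; minimizing over $R>0$ (the minimizer is $R^{2(s+\beta)}=\beta B/(sA)$) yields, by weighted AM--GM, $\norm{f}^2\le c_{s,\beta}\,A^{\beta/(s+\beta)}B^{s/(s+\beta)}$, which after taking square roots is exactly \eqref{eqhe.}.

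The one genuinely delicate case is $s>a$, and it is where I expect the main obstacle. Here part~(3) of Theorem~\ref{prop:local.} (with $m=0$) gives $\norm{\tt(f)}_{L^2(\Sigma_R,\widehat\mu)}\le c(s,\tt)(\kappa R^{2a})^{1/2}\norm{f}^{1-a/s}\big\||x|^sf\big\|^{a/s}$, so that $\norm{f}$ itself reappears on the right. Running the same split and optimization (now the low-frequency power of $R$ is $2a$, not $2s$) leads to an inequality of the shape $\norm{f}^2\le c_{a,\beta}\big(c\,\norm{f}^{2-2a/s}\big\||x|^sf\big\|^{2a/s}\big)^{\beta/(a+\beta)}\big\||\xi|^\beta\tt(f)\big\|^{2a/(a+\beta)}$. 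The factor $\norm{f}^{(2-2a/s)\beta/(a+\beta)}$ must then be absorbed into the left-hand side by dividing through (legitimate as $f\neq0$); the remaining power of $\norm{f}$ works out to $\frac{2a(s+\beta)}{s(a+\beta)}>0$, and raising everything to the reciprocal of that exponent collapses the inequality back to \eqref{eqhe.}. Verifying that these exponents recombine precisely into $\frac{\beta}{s+\beta}$ and $\frac{s}{s+\beta}$ is the only bookkeeping that needs care. Finally, the hypothesis $s\ne a$ is exactly what lets us bypass the borderline part~(2), which for $m=0$ degenerates to the single value $s=a$.
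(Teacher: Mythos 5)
Your proposal is correct and follows essentially the same route as the paper: split $\norm{f}^2_{L^2(\Omega,\mu)}$ via Plancherel over the ball $\Sigma_R$ and its complement, apply Theorem \ref{prop:local.} (1) or (3) (using homogeneity to write $\widehat{\mu}(\Sigma_R)=R^{2a}\widehat{\mu}(\Sigma_1)$), bound the tail by $R^{-2\beta}\big\||\xi|^{\beta}\tt(f)\big\|^2_{L^2(\widehat{\Omega},\widehat{\mu})}$, and optimize in $R$. The only (harmless) bookkeeping difference is in the case $s>a$: the paper first interpolates the tail term, $\norm{\tt(f)}^2_{L^2(B_r^c,\widehat{\mu})}\le \norm{\tt(f)}^{2a/s}_{L^2(B_r^c,\widehat{\mu})}\norm{f}^{2-2a/s}_{L^2(\Omega,\mu)}$, so as to cancel $\norm{f}^{2-2a/s}_{L^2(\Omega,\mu)}$ \emph{before} minimizing, whereas you minimize first and then divide out the stray power of $\norm{f}_{L^2(\Omega,\mu)}$ --- and your exponents do recombine correctly, since $2-\frac{(2-2a/s)\beta}{a+\beta}=\frac{2a(s+\beta)}{s(a+\beta)}$ and raising to its reciprocal turns $\frac{2a\beta}{s(a+\beta)}$ and $\frac{2a}{a+\beta}$ into $\frac{\beta}{s+\beta}$ and $\frac{s}{s+\beta}$.
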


\begin{proof}
In this proof, we will denote by $B_r=\widehat{\Omega}\cap\{x\,: |x|\leq r\}$ and
$B_r^c=\widehat{\Omega}\setminus B_r$.

Let $0<s<a$ and $\beta>0$. Then, using Plancherel's theorem and Theorem \ref{prop:local.} (1),
\begin{eqnarray*}
\norm{f}^{2}_{L^2(\Omega,\mu)} &=& \norm{\tt(f)}^{2}_{L^2(\widehat{\Omega},\widehat{\mu})}=
 \norm{\tt(f)}^{2}_{L^2(B_r,\widehat{\mu})}+\norm{\tt(f)}^{2}_{L^2(B_r^c,\widehat{\mu})} \nonumber\\
&\le& c(s,\tt)\widehat{\mu}(B_r)^{\frac{s}{a}}\bigl\|\abs{x}^{s}f\bigr\|^{2}_{L^2(\Omega,\mu)}
 + r^{-2\beta}\bigl\||\xi|^{\beta}\tt(f)\bigr\|^{2}_{L^2(\widehat{\Omega},\widehat{\mu})}\nonumber\\
&\le& c'(s,\tt)r^{2s}\bigl\|\abs{x}^{s}f\bigr\|^{2}_{L^2(\Omega,\mu)}
+ r^{-2\beta}\bigl\||\xi|^{\beta}\tt(f)\bigr\|^{2}_{L^2(\widehat{\Omega},\widehat{\mu})}.
\end{eqnarray*}
The desired result follows by minimizing the right hand side of that inequality over $r > 0$.

For $s>a$ and $\beta>0$ we deduce from Plancherel's theorem and Theorem \ref{prop:local.} (3) that
\begin{eqnarray}
\norm{f}^{2}_{L^2(\Omega,\mu)} &=&
\norm{\tt(f)}^{2}_{L^2(\widehat{\Omega},\widehat{\mu})}
= \norm{\tt(f)}^{2}_{L^2(B_r,\widehat{\mu})}+\norm{\tt(f)}^{2}_{L^2(B_r^c,\widehat{\mu})} \nonumber\\
&\le& c(s,\tt)^2\bigl\|f\bigr\|^{2-\frac{2a}{s}}_{L^2(\Omega,\mu)}
\widehat{\mu}(B_r)\bigl\|\abs{x}^{s}f\bigr\|_{L^2(\Omega,\mu)}^{\frac{2a}{s}}
+ \norm{\tt(f)}^{2}_{L^2(B_r^c,\widehat{\mu})}\label{eq:farisaaa}.
\end{eqnarray}
But, using Plancherel's theorem again,
$$
\norm{\tt(f)}^{2}_{L^2(B_r^c,\widehat{\mu})}
\leq \norm{\tt(f)}_{L^2(B_r^c,\widehat{\mu})}^{\frac{2a}{s}}
\norm{\tt(f)}_{L^2(\widehat{\Omega},\widehat{\mu})}^{2-\frac{2a}{s}}
=\norm{\tt(f)}_{L^2(B_r^c,\widehat{\mu})}^{\frac{2a}{s}}
\norm{f}_{L^2(\Omega,\mu)}^{2-\frac{2a}{s}}
$$
so that, in \eqref{eq:farisaaa}, we may simplify by $\norm{f}_{L^2(\Omega,\mu)}^{2-\frac{2a}{s}}$
to obtain
\begin{eqnarray*}
\norm{f}^{\frac{2a}{s}}_{L^2(\Omega,\mu)}&\leq& c(s,\tt)^2
 \widehat{\mu}(B_r)\bigl\|\abs{x}^{s}f\bigr\|_{L^2(\Omega,\mu)}^{\frac{2a}{s}}
+\norm{\tt(f)}_{L^2(B_r^c,\widehat{\mu})}^{\frac{2a}{s}}.\\
&\le& c'(s,\tt)\;r^{2 a}\big\|\abs{x}^{s}f\big\|^{\frac{2a}{s}}_{L^2(\Omega,\mu)}+ r^{-\frac{2a\beta}{s}}\big\||\xi|^{\beta}\tt(f)\big\|^{\frac{2a}{s}}_{L^2(\widehat{\Omega},\widehat{\mu})}.
\end{eqnarray*}
The desired result follows by minimizing the
right hand side of that inequality over $r > 0$.
\end{proof}

Inequality \eqref{eqhe.} has been obtained by Cowling and Price \cite{cp} for the Fourier transform 
on $\R^d$ and later  generalized in \cite{Mar} for any pair of positive self-adjoint operators on a Hilbert space.
In particular when $s=\beta=1$ we obtain a version of Heisenberg's uncertainty principle for the operator $\tt$.
Moreover if the function $f\in L^2(\Omega,\mu)$ is supported in a subset $S$ of finite measure one can easily
obtain bounds on $\tt(f)$
that limit the concentration of $\tt(f)$  in any small set and may provide lower bounds for the concentration of
$\tt(f)$ in sufficiently large sets.
For instance we have this simple local uncertainty inequality : if $f$ is supported in a set $S$ with finite measure
$\mu_{2m}(S)<\infty$, then
\begin{eqnarray}
\|\tt(f)\|^2_{L^2(\Sigma ,\widehat{\mu})}
&\le& \widehat{\mu}_{2\widehat{m}}(\Sigma) \|\tt(f)\|^2_{\infty,\widehat{m}}
\le c_{\tt}^2\widehat{\mu}_{2\widehat{m}}(\Sigma)\|f\|^2_{L^1(\Omega,\mu_m)}\nonumber\\
&\le& c_{\tt}^2\mu_{2m}(S)\widehat{\mu}_{2\widehat{m}}(\Sigma)\|f\|^2_{L^2(\Omega,\mu)},\label{eqlocst}
\end{eqnarray}
which implies that the pair  $(S,\Sigma)$ is strongly annihilating provided that $
\mu_{2m}(S)\widehat{\mu}_{2\widehat{m}}(\Sigma)<c_{\tt}^{-2}$.
In the next section we will prove this result for arbitrary subsets $S$ and $\Sigma$ of finite measure.

\section{Pairs of sets of finite measure are strongly annihilating}

In this section we will show that, if $S\subset\Omega$, $\Sigma\subset\widehat{\Omega}$ are sets of finite measure $ 0<\mu_{2m}(S),\;\widehat{\mu}_{2\widehat{m}}(\Sigma)<\infty$, then the pair $(S,\Sigma)$
is strongly annihilating for the operator $\tt$. 
In order to prove this,
we will need to
introduce a pair of orthogonal projections on $L^2(\Omega,\mu)$ defined by
\begin{equation*}
E_S f= \chi_S f,\hspace{1cm} F_\Sigma= \tt^{-1}E_\Sigma \tt,
\end{equation*}
where  $S\subset\Omega$ and $\Sigma\subset\widehat{\Omega}$ are measurable subsets.

We will need the following well-known lemma  ({\it see e.g.} \cite[Lemma 4.1]{GJ}):

 \begin{lemma}\ \\ \label{lemma1}
If $\|E_SF_\Sigma\|:=\|E_SF_\Sigma\|_{L^2(\Omega,\mu)\to L^2(\Omega,\mu)}<1$, then
\begin{equation}\label{stfin}
\|f\|_{L^2(\Omega,\mu)}^2\le \left(1-\|E_SF_\Sigma\|\right)^{-2}\left( \|E_{S^c}f\|_{L^2(\Omega,\mu)}^2
+\|F_{\Sigma^c} f\|_{L^2(\Omega,\mu)}^2 \right).
\end{equation}
\end{lemma}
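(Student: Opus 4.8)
The plan is to treat this as a purely Hilbert-space statement about the two orthogonal projections $E_S$ and $F_\Sigma$ on $L^2(\Omega,\mu)$. First I would record that both are genuine orthogonal projections: $E_S$ is multiplication by $\chi_S$, which is self-adjoint and idempotent, while $F_\Sigma=\tt^{-1}E_\Sigma\tt$ is unitarily conjugate to the orthogonal projection $E_\Sigma$ through the unitary $\tt$ (Plancherel's theorem), hence is itself an orthogonal projection. In the same way $E_{S^c}=I-E_S$ and $F_{\Sigma^c}=I-F_\Sigma$, so the two terms on the right of \eqref{stfin} are precisely $\norm{E_{S^c}f}_{L^2(\Omega,\mu)}^2$ and $\norm{F_{\Sigma^c}f}_{L^2(\Omega,\mu)}^2$. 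The whole estimate will then follow from a Neumann-series bound on the operator $I-E_SF_\Sigma$.

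The key algebraic step is the identity
$$ (I-E_SF_\Sigma)f = E_S(I-F_\Sigma)f+(I-E_S)f = E_SF_{\Sigma^c}f+E_{S^c}f, $$
verified by expanding the right-hand side. The two summands lie in the mutually orthogonal subspaces $\Im E_S$ and $\ker E_S$, so Pythagoras together with $\norm{E_S}\le 1$ yields
$$ \norm{(I-E_SF_\Sigma)f}_{L^2(\Omega,\mu)}^2 = \norm{E_SF_{\Sigma^c}f}_{L^2(\Omega,\mu)}^2+\norm{E_{S^c}f}_{L^2(\Omega,\mu)}^2 \le \norm{F_{\Sigma^c}f}_{L^2(\Omega,\mu)}^2+\norm{E_{S^c}f}_{L^2(\Omega,\mu)}^2. $$

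To finish, I would invoke the hypothesis $\norm{E_SF_\Sigma}<1$: the Neumann series shows that $I-E_SF_\Sigma$ is invertible on $L^2(\Omega,\mu)$ with $\norm{(I-E_SF_\Sigma)^{-1}}\le(1-\norm{E_SF_\Sigma})^{-1}$. Writing $f=(I-E_SF_\Sigma)^{-1}(I-E_SF_\Sigma)f$ and taking norms then gives $\norm{f}_{L^2(\Omega,\mu)}\le(1-\norm{E_SF_\Sigma})^{-1}\bigl(\norm{E_{S^c}f}_{L^2(\Omega,\mu)}^2+\norm{F_{\Sigma^c}f}_{L^2(\Omega,\mu)}^2\bigr)^{1/2}$, and squaring produces exactly \eqref{stfin}. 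There is no serious obstacle here; the argument is elementary once the right decomposition is spotted. The one point deserving care is the orthogonality powering the Pythagorean step: it is essential to split off $(I-E_S)f$, which is orthogonal to $\Im E_S$, rather than to bound $E_SF_\Sigma f$ crudely by $\norm{E_SF_\Sigma}\,\norm{f}_{L^2(\Omega,\mu)}$, since the latter route destroys the clean $\ell^2$ combination on the right and the sharp constant $(1-\norm{E_SF_\Sigma})^{-2}$.
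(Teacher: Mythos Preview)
Your proof is correct and is the standard argument. The paper does not actually prove this lemma; it cites it as ``well-known'' with a reference to \cite[Lemma 4.1]{GJ}, so there is no in-paper proof to compare against, but the decomposition $(I-E_SF_\Sigma)f=E_SF_{\Sigma^c}f+E_{S^c}f$ followed by the Neumann-series bound is exactly the classical route.
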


Unfortunately, showing that $\|E_{S}F_\Sigma\|<1$ is in general difficult. However, the Hilbert-Schmidt norm
$\|E_{S}F_\Sigma\|_{HS}$ is much easier to compute.
Let us illustrate this fact by showing that,
if $S$ and $\Sigma$ are subsets with sufficiently small measure then the pair $(S,\Sigma)$ is strongly annihilating.
We can deduce this result easily from \eqref{eqlocst}, but we will give here another proof that we will use later.

\begin{lemma}\ \\
If $\mu_{2m}(S)\widehat{\mu}_{2\widehat{m}}(\Sigma)<c_{\tt}^{-2}$, then for all function $f\in L^2(\Omega,\mu)$,
\begin{equation*}
\|f\|^2_{L^2(\Omega,\mu)}\le \left(1-c_{\tt}\sqrt{\mu_{2m}(S)\widehat{\mu}_{2\widehat{m}}(\Sigma)}\right)^{-2}
\Big(\|f\|^2_{L^2(S^c,\mu)} +\|\tt(f)\|^2_{L^2(\Sigma^c,\widehat{\mu})}  \Big).
\end{equation*}
\end{lemma}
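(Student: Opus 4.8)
The plan is to combine Lemma \ref{lemma1} with an estimate on the Hilbert--Schmidt norm of the composition $E_S F_\Sigma$. By Lemma \ref{lemma1}, it suffices to show that $\|E_S F_\Sigma\| < 1$ under the stated hypothesis, and in fact I would prove the sharper bound $\|E_S F_\Sigma\| \le \|E_S F_\Sigma\|_{HS} \le c_{\tt}\sqrt{\mu_{2m}(S)\widehat{\mu}_{2\widehat{m}}(\Sigma)}$, since the operator norm is dominated by the Hilbert--Schmidt norm. Once this is in place, the hypothesis $\mu_{2m}(S)\widehat{\mu}_{2\widehat{m}}(\Sigma) < c_{\tt}^{-2}$ guarantees $\|E_S F_\Sigma\|<1$, and plugging the bound into \eqref{stfin} yields exactly the claimed inequality with the constant $\left(1-c_{\tt}\sqrt{\mu_{2m}(S)\widehat{\mu}_{2\widehat{m}}(\Sigma)}\right)^{-2}$, after rewriting $\|E_{S^c}f\|=\|f\|_{L^2(S^c,\mu)}$ and $\|F_{\Sigma^c}f\|=\|\tt^{-1}E_{\Sigma^c}\tt f\|=\|E_{\Sigma^c}\tt f\|=\|\tt(f)\|_{L^2(\Sigma^c,\widehat\mu)}$, the last equalities using that $\tt$ is unitary by Plancherel's theorem.

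The core computation is therefore the Hilbert--Schmidt estimate. First I would observe that $E_S F_\Sigma = E_S \tt^{-1} E_\Sigma \tt$ acts on $f$ by
$$
E_S F_\Sigma f(x) = \chi_S(x)\int_{\widehat\Omega}\chi_\Sigma(\xi)\,\tt(f)(\xi)\,\overline{\kk(x,\xi)}\,\d\widehat\mu(\xi),
$$
and unfolding $\tt(f)(\xi)=\int_\Omega f(y)\kk(y,\xi)\d\mu(y)$ and interchanging the order of integration exhibits $E_S F_\Sigma$ as an integral operator on $L^2(\Omega,\mu)$ with kernel
$$
N(x,y)=\chi_S(x)\int_{\widehat\Omega}\chi_\Sigma(\xi)\,\overline{\kk(x,\xi)}\,\kk(y,\xi)\,\d\widehat\mu(\xi).
$$
The Hilbert--Schmidt norm is then $\|E_S F_\Sigma\|_{HS}^2 = \int_\Omega\int_\Omega |N(x,y)|^2\,\d\mu(x)\,\d\mu(y)$, and I would bound it by using the polynomial bound (2) on the kernel, $|\kk(x,\xi)|\le c_{\tt}(1+|x|)^m(1+|\xi|)^{\widehat m}$, so that the inner integrand is controlled by $c_{\tt}^2(1+|x|)^m(1+|y|)^m(1+|\xi|)^{2\widehat m}\chi_\Sigma(\xi)$.

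Carrying the bound through, the $\xi$-integral of $(1+|\xi|)^{2\widehat m}\chi_\Sigma(\xi)$ against $\d\widehat\mu$ is precisely $\widehat\mu_{2\widehat m}(\Sigma)$, and the two $x,y$ integrals of $(1+|x|)^{2m}\chi_S(x)$ against $\d\mu$ each produce $\mu_{2m}(S)$; assembling these gives $\|E_S F_\Sigma\|_{HS}^2 \le c_{\tt}^2\,\mu_{2m}(S)\,\widehat\mu_{2\widehat m}(\Sigma)$, which is the desired estimate after taking square roots. The only subtlety I anticipate is justifying the Fubini interchange and confirming the operator really is the claimed integral operator on a dense subspace before extending; this is routine given the polynomial bound and the finiteness of $\mu_{2m}(S)$ and $\widehat\mu_{2\widehat m}(\Sigma)$, but it is the step that requires the measures $\mu_{2m}$ and $\widehat\mu_{2\widehat m}$ to be finite on $S$ and $\Sigma$ respectively, which is exactly why these weighted measures (rather than $\mu$ and $\widehat\mu$) appear in the hypothesis. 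The genuinely easy part is that we never need the operator norm directly: dominating it by the Hilbert--Schmidt norm sidesteps the difficulty flagged in the paragraph preceding the lemma.
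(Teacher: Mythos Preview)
Your overall plan is right --- reduce to Lemma \ref{lemma1} via a Hilbert--Schmidt bound --- but the Hilbert--Schmidt computation as written does not go through. The kernel $N(x,y)$ carries $\chi_S$ only in the $x$-variable, not in $y$. If you bound $|N(x,y)|$ pointwise by $\chi_S(x)\,c_\tt^2(1+|x|)^m(1+|y|)^m\widehat\mu_{2\widehat m}(\Sigma)$ as you propose, then squaring and integrating gives
\[
\iint |N(x,y)|^2\,\d\mu(x)\,\d\mu(y)\le c_\tt^4\,\widehat\mu_{2\widehat m}(\Sigma)^2\,\mu_{2m}(S)\int_\Omega(1+|y|)^{2m}\,\d\mu(y),
\]
and the $y$-integral is $\mu_{2m}(\Omega)$, which is infinite. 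Your sentence ``the two $x,y$ integrals of $(1+|x|)^{2m}\chi_S(x)$ \ldots\ each produce $\mu_{2m}(S)$'' is simply wrong: there is no $\chi_S(y)$, so the $y$-integral is not localized to $S$.

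The paper fixes this not by a pointwise bound but by recognizing that, for fixed $x\in S$, the map $y\mapsto N(x,y)$ is $\tt^{-1}$ applied to $\chi_\Sigma(\cdot)\overline{\kk(x,\cdot)}$. Plancherel then gives the $y$-integral \emph{exactly}:
\[
\int_\Omega|N(x,y)|^2\,\d\mu(y)=\chi_S(x)\int_{\widehat\Omega}\chi_\Sigma(\xi)|\kk(x,\xi)|^2\,\d\widehat\mu(\xi)\le c_\tt^2\,\chi_S(x)(1+|x|)^{2m}\,\widehat\mu_{2\widehat m}(\Sigma),
\]
after which the remaining $x$-integral is genuinely over $S$ and produces $\mu_{2m}(S)$. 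This is the missing idea: Plancherel in one variable converts the unlocalized $y$-integral into a $\xi$-integral that \emph{is} localized by $\chi_\Sigma$, and the kernel bound is used only once (hence $c_\tt^2$, not $c_\tt^4$). The rest of your argument --- the reduction to Lemma \ref{lemma1} and the identifications $\|E_{S^c}f\|=\|f\|_{L^2(S^c,\mu)}$, $\|F_{\Sigma^c}f\|=\|\tt(f)\|_{L^2(\Sigma^c,\widehat\mu)}$ --- is fine.
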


\begin{proof}
We have, for 
 $f\in L^2(\Omega,\mu)$, $\abs{\tt(f)(\eta)}\leq c_\tt(1+|\xi|)^{\widehat{m}}\norm{f}_{\infty,m}$
thus if $\widehat{\mu}_{2\widehat{m}}(\Sigma)<\infty$, $\chi_\Sigma(\eta) \tt(f)(\eta)\in
L^1(\widehat{\Omega},\widehat{\mu}_{\widehat{m}})$. The Inversion Formula for $\tt$
thus gives
\begin{eqnarray*}
E_SF_\Sigma f(y)&=& \chi_S(y)\int_{\widehat{\Omega}} \chi_\Sigma(\eta) \tt(f)(\eta)\overline{\kk(y,\eta)}
\d \widehat{\mu}(\eta) \\
 &=& \chi_S(y)\int_{\widehat{\Omega}} \chi_\Sigma(\eta) \left(\int_{\Omega} f(x) \kk(x,\eta)\d
 \mu(x)\right)\overline{\kk(y,\eta)}\d \widehat{\mu}(\eta) \\
&=& \int_{\Omega} f(x)\nn(x,y)\d \mu(x),
\end{eqnarray*}
where
\begin{eqnarray*}
\nn(x,y)&=&\chi_S(y) \int_{\widehat{\Omega}} \chi_\Sigma(\eta)\kk(x,\eta)\overline{\kk(y,\eta)}\d\widehat{\mu}(\eta) \\
&=& \chi_S(y) \overline{\int_{\widehat{\Omega}} \chi_\Sigma(\eta)\kk(y,\eta)\overline{\kk(x,\eta)} \d \widehat{\mu}(\eta)}\\
&=&\chi_S(y) \overline{\tt^{-1}\left[\chi_\Sigma(\cdot) \kk(y,\cdot)\right](x)}.
\end{eqnarray*}
Here we appealed repeatedly to Fubini's theorem which is justified by the fact that
$\widehat{\mu}_{2\widehat{m}}(\Sigma)<\infty$ and $\kk$ is bounded by $c_\tt(1+|x|)^m(1+|\xi|)^{\widehat{m}}$.

This shows that $E_SF_\Sigma$ is an integral operator with kernel $\nn$. But, with Plancherel's theorem,
\begin{eqnarray*}
\norm{\nn}^2_{L^2(\Omega,\mu)\otimes L^2(\Omega,\mu)}
&=&\int_{\Omega} |\chi_S(y)|^2\left(\int_{\Omega }\abs{\tt^{-1}\left[\chi_\Sigma(\cdot) \kk(y,\cdot)\right](x)}^2
\d \mu(x)\right) \d \mu(y)\\
&=&\int_{\Omega} |\chi_S(y)|\left(\int_{\widehat{\Omega} }\abs{\chi_\Sigma(\eta) \kk(y,\eta)}^2
\d\widehat{\mu}(\eta)\right) \d \mu(y)\\
&\leq&c_{\tt}^2\mu_{2m}(S)\widehat{\mu}_{2\widehat{m}}(\Sigma)
\end{eqnarray*}
since $|\kk(y,\eta)|\le c_{\tt}(1+|y|)^m(1+|\eta|)^{\widehat{m}}$.
It follows that the Hilbert-Schmidt norm of $E_SF_\Sigma$ is bounded:
\begin{equation}
\label{finite}
\|E_SF_\Sigma\|_{HS}=\norm{\nn}_{L^2(\Omega,\mu)\otimes L^2(\Omega,\mu)}
\le c_{\tt}\sqrt{\mu_{2m}(S)\widehat{\mu}_{2\widehat{m}}(\Sigma)}.
\end{equation}
Now using the fact that $\|E_SF_\Sigma\|\le\|E_SF_\Sigma\|_{HS}$,
we obtain
$$
\|E_SF_\Sigma\|\le c_{\tt}\sqrt{\mu_{2m}(S)\widehat{\mu}_{2\widehat{m}}(\Sigma)}.
$$
It follows from Lemma \ref{lemma1} that
\begin{equation}
\label{eqst}
    \|f\|_{L^2(\Omega,\mu)}^2 \le \left(1-c_{\tt}\sqrt{\mu_{2m}(S)\widehat{\mu}_{2\widehat{m}}(\Sigma)}\right)^{-2}
\Big(\|E_{S^c}f\|_{L^2(\Omega,\mu)}^2+\|F_{\Sigma^c}f\|_{L^2(\Omega,\mu)}^2\Big).
\end{equation}
Plancherel's theorem then gives $\|F_{\Sigma^c}f\|^2_{L^2(\Omega,\mu)}=\|\tt(f)\|^2_{L^2(\Sigma^c,\widehat{\mu})}$ which allows to conclude.
\end{proof}

\begin{remarknum}\ \\
Let $S$, $\Sigma$ be two sets with $\mu_{2m}(S),\widehat{\mu}_{2\widehat{m}}(\Sigma)< \infty$.
Let $\eps_1,\eps_2>0$. Assume that there is a function
$f\in L^2(\Omega,\mu)$ with $\|f\|_{L^2(\Omega,\mu)}=1$ that is $\eps_1$-concentrated on $S$,
\ie $\|E_{S^c}f\|_{L^2(\Omega,\mu)}\le \eps_1$
and $\eps_2$-bandlimited
on $\Sigma$ for the transformation $\tt$, \ie
$\|F_{\Sigma^c}f\|_{L^2(\Omega,\mu)}\le \eps_2$.

Then either $\mu_{2m}(S)\widehat{\mu}_{2\widehat{m}}(\Sigma)\geq c_\tt^{-2}$ or we may
apply Inequality \eqref{eqst} and obtain
$$
1- c_{\tt}\sqrt{\mu_{2m}(S)\widehat{\mu}_{2\widehat{m}}(\Sigma)}\le \sqrt{\eps_1^2+\eps_2^2}.
$$
In both cases, we obtain
\begin{equation}\label{eqdseps}
\mu_{2m}(S)\widehat{\mu}_{2\widehat{m}}(\Sigma)\ge c_{\tt}^{-2}\left(1-\sqrt{\eps_1^2+\eps_2^2}\right)^2,
\end{equation}
which is Donoho-Stark's uncertainty inequality for the integral operator $\tt$.
This inequality improves slightly the result of de Jeu \cite{dj}. In the case of the Fourier transform, it dates back
to Donoho and Stark \cite{DS} in a slightly weaker form and to \cite{hl} to the form \eqref{eqdseps}.
 \end{remarknum}

\medskip
%
%
%
%
Before proving our main theorem, we will now prove the following lemma which results directly from a similar
result in \cite{GJ} for functions in $\cc_0(\R^+)$.

\begin{lemma}\label{libre}\ \\
Let $f$ be a function in $L^2(\Omega,\mu)$ and assume that $0<\mu(\supp \,f)< \infty$.
Then the dilates 
$\{\dd_\lambda f\}_{\lambda>0}$ are linearly independent.
\end{lemma}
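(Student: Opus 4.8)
The plan is to prove linear independence of the dilates $\{\dd_\lambda f\}_{\lambda>0}$ by exploiting how dilation interacts with the support of $f$. Suppose for contradiction that some finite linear combination vanishes: $\sum_{j=1}^n c_j \dd_{\lambda_j} f = 0$ in $L^2(\Omega,\mu)$, with the $c_j$ not all zero and the $\lambda_j$ distinct. Since $\dd_\lambda f(x) = \lambda^{-a} f(x/\lambda)$, the dilate $\dd_{\lambda_j} f$ is supported on $\lambda_j \cdot \supp f = \{\lambda_j x : x \in \supp f\}$. The key idea is that for distinct scaling factors these dilated supports cannot cover one another in a way that allows cancellation at the ``extreme'' end of the support, because $\mu$ is homogeneous of degree $2a$ and the cone structure of $\Omega$ forces the scaled supports to be genuinely different sets near infinity (or near the origin).

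First I would reduce to the radial structure. Because $0 < \mu(\supp f) < \infty$ and $\mu$ has the polar decomposition $\d\mu(r\zeta) = r^{2a-1}\d r\, Q(\zeta)\,\d\sigma(\zeta)$, I can work with the radial profile of the support. The cleanest approach is to define $R = \esssup\{|x| : x \in \supp f\}$; finiteness of $\mu(\supp f)$ together with the weight $r^{2a-1}$ does \emph{not} immediately bound $R$, so instead I would look at the lower end. Let me reorder the $\lambda_j$ so that $\lambda_1 > \lambda_2 > \cdots > \lambda_n$. Then among the dilated supports $\lambda_j \cdot \supp f$, the largest-scale copy $\lambda_1 \cdot \supp f$ reaches furthest out radially. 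The strategy is to find a region of positive $\mu$-measure on which exactly one of the dilates is nonzero, forcing the corresponding coefficient to vanish, and then to iterate.

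The mechanism for isolating a single dilate is the reference to the one-dimensional result in \cite{GJ} for functions in $\cc_0(\R^+)$. The natural move is to pass to the radial variable: writing $g(r) = \left(\int_{\S^{d-1}} |f(r\zeta)|^2 Q(\zeta)\,\d\sigma(\zeta)\right)^{1/2}$ or an analogous radial reduction, one obtains a nonzero function of $r$ whose dilates $\{g(\cdot/\lambda)\}$ must be linearly independent precisely by the cited $\cc_0(\R^+)$ result. The relation $\tt\dd_\lambda = \widehat{\dd}_{1/\lambda}\tt$ and the homogeneity \eqref{eqmesure} guarantee that the radial reduction commutes with dilation, so a dependence among $\{\dd_{\lambda_j}f\}$ in $L^2(\Omega,\mu)$ descends to a dependence among the radial dilates, contradicting \cite{GJ}.

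The main obstacle will be making the radial reduction rigorous while preserving both the nonvanishing and the linear-independence structure. Specifically, I must ensure that the angular integration does not accidentally annihilate the function (which is why $Q \not= 0$ and $0 < \mu(\supp f)$ are needed) and that a linear relation among the full dilates genuinely forces a relation among the radial profiles rather than merely among their $L^2$-norms. The honest way around this is to test the relation against suitable angular components or to invoke the fact, established in \cite{GJ}, that dilates of a nonzero $\cc_0$-function on $\R^+$ are linearly independent, and then argue that a compactly-$\mu$-supported $L^2$ function admits a nontrivial such radial trace on a set of directions of positive $\d\sigma$-measure. Assembling these pieces—distinctness of $\lambda_j$, homogeneity of $\mu$, and the cited independence lemma—yields the contradiction and completes the proof.
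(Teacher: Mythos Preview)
Your proposal contains a genuine gap at the point where you pass to a radial reduction. The function
\[
g(r)=\left(\int_{\S^{d-1}}|f(r\zeta)|^2 Q(\zeta)\,\d\sigma(\zeta)\right)^{1/2}
\]
is \emph{not} a linear function of $f$, so a relation $\sum_j c_j\dd_{\lambda_j}f=0$ does \emph{not} imply $\sum_j c_j g(\cdot/\lambda_j)=0$. You recognise this difficulty yourself, but you do not resolve it: ``testing against angular components'' is left as a slogan. The paper fixes a single direction $\zeta\in\S^{d-1}\cap\Omega$ and sets $f_\zeta(t)=t^{a-1/2}f(t\zeta)$; this \emph{is} linear in $f$, and a vanishing combination of dilates of $f$ does descend to a vanishing combination of dilates of $f_\zeta$. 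The paper then verifies, using the polar decomposition of $\mu$ and the hypothesis $0<\mu(\supp f)<\infty$, that one can choose $\zeta$ so that $f_\zeta\in L^2(\R)$ and $0<|\supp f_\zeta|<\infty$, hence $f_\zeta\in L^1(\R)$.

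A second gap is that you plan to invoke the result from \cite{GJ} directly on the radial trace, but that lemma concerns functions in $\cc_0$, and there is no reason for $f_\zeta$ (or any radial slice of an $L^2$ function with finite-measure support) to be continuous, let alone in $\cc_0$. The paper's remedy is to apply the Euclidean Fourier transform to $f_\zeta$: since $f_\zeta\in L^1(\R)$, Riemann--Lebesgue gives $\ff(f_\zeta)\in\cc_0(\R)$, and the dilation relation survives the Fourier transform (turning into a dilation by $\lambda_i$ on the frequency side). Only then can the $\cc_0$ linear-independence lemma from \cite{GJ} be applied. This Fourier/Riemann--Lebesgue step is the crux of the argument and is entirely absent from your outline.
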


\begin{proof}
Let $\zeta\in\S^{d-1}\cap\Omega$ and consider
$$f_\zeta(t)=\dst\begin{cases}t^{a-1/2}f(t\zeta),&\mbox{for}\;t>0;\\ 0,&\mbox{for}\;t<0.\end{cases}$$
For $\zeta\in\S^{d-1}\cap\Omega^c$, we just define $f_\zeta=0$.

Then, there exists $\zeta$ such that $f_\zeta\in L^2(\R)$ and $0<|\mbox{supp}\,f_\zeta|< \infty$,
in particular, $f_\zeta\in L^1(\R)$.
Indeed the first property holds for almost every $\zeta$ since
$$
\int_{\S^{d-1}}\int_\R|f_\zeta(t)|^2\,\mbox{d}t\,Q(\theta)\,\mbox{d}\sigma(\theta)
=\norm{f}^2_{L^2(\Omega,\mu)}< \infty.
$$
As for the second one, notice that
$$
|\supp f_\zeta|\leq |[0,1]|+\int_{\supp f_\zeta\cap[1, \infty)}r^{2a-1}\,\mbox{d}r.
$$
Integrating with respect to $Q(\zeta)\,\mbox{d}\sigma(\zeta)$ we get
$$
\int_{\S^{d-1}\cap\Omega}|\supp f_\zeta|Q(\zeta)\,\mbox{d}\sigma(\zeta)\leq
\norm{Q}_{L^1(\S^{d-1}\cap\Omega)}+\mu\left(\supp f\cap \{|x|>1\}\right)< \infty.
$$
We thus proved that $|\mbox{supp}\,f_\zeta|< \infty$ for almost every $\zeta$. Finally,
$|\mbox{supp}\,f_\zeta|>0$ on a set of $\zeta$'s of positive $\mbox{d}\sigma$ measure, otherwise the support of $f$
would have Lebesgue measure $0$, thus $\mu$-measure zero.

\medskip

Now assume that we had a vanishing linear combination of dilates of $f$:
\begin{equation}
\label{eq:linindphil}
\dst \sum_{finite} \alpha_i f(x/\lambda_i)=0.
\end{equation}
Then, for $t>0$ and the above $\zeta$
$$
\sum_{finite} \alpha_i  \left(\frac{\lambda_i}{t}\right)^{a-1/2}\left(\frac{t}{\lambda_i}\right)^{a-1/2}f\left(\frac{t}{\lambda_i}\zeta\right)=
\frac{1}{t^{a-1/2}}\sum_{finite}\beta_i f_\zeta(t/\lambda_i)=0
$$
where we have set $\beta_i=\alpha_i\lambda_i^{a-1/2}$. Thus 
 $$\sum_{finite}\beta_i f_\zeta(t/\lambda_i)=0.$$
Taking the Euclidean Fourier transform $\ff$, we obtain
$$
\dst \sum_{finite} \beta_i\lambda_i \ff(f_\zeta)(\lambda_i x)=0.
$$
But, as  
$f_\zeta\in L^1(\R)$, it follows from Riemann-Lebesgue's Lemma that 
$\ff(f_\zeta)\in\cc_0(\R)$. It remains to invoke \cite[Lemma 2.1]{GJ} to see that the dilates of 
$\ff(f_\zeta)$ are linearly independent so that
the $\beta_i$'s thus the $\alpha_i$'s are $0$.
\end{proof}

We can now state our main theorem:

\begin{theorem}\ \\
\label{weak}
Let $S\subset\Omega$, $\Sigma\subset\widehat{\Omega}$ be a pair of measurable subsets
with $0<\mu_{2m}(S),\widehat{\mu}_{2\widehat{m}}(\Sigma)<\infty$. Then
any  function $f\in L^2(\Omega,\mu)$ vanishes as soon as $f$ is supported in $S$ and $\tt(f)$ is supported in $\Sigma$.
In other words, $(S,\Sigma)$ is a weak annihilating pair.
\end{theorem}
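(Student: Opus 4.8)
The plan is to prove that the subspace
\[
V=\set{f\in L^2(\Omega,\mu)\ :\ \supp f\subseteq S\ \text{and}\ \supp\tt(f)\subseteq\Sigma}
\]
is trivial. First I would recast membership in $V$ in terms of the orthogonal projections $E_S$ and $F_\Sigma$ introduced before Lemma \ref{lemma1}: indeed $\supp f\subseteq S$ means $E_Sf=f$, and $\supp\tt(f)\subseteq\Sigma$ means $F_\Sigma f=f$, so every $f\in V$ satisfies $E_SF_\Sigma f=E_Sf=f$. Thus $V$ is contained in the eigenspace of $E_SF_\Sigma$ for the eigenvalue $1$.

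The first substantial step is to show $\dim V<\infty$. This is where the hypothesis $\mu_{2m}(S)\,\widehat\mu_{2\widehat m}(\Sigma)<\infty$ enters: the computation leading to \eqref{finite} shows that $E_SF_\Sigma$ is then Hilbert--Schmidt, hence compact, and by the Riesz theory of compact operators its eigenspace for the nonzero eigenvalue $1$ is finite-dimensional. (Using the Inversion Formula for $\tt$ as before, one sees in passing that every $f\in V$ is continuous, which is handy for the support arguments below.)

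I would then argue by contradiction: assume $V\neq\{0\}$ and fix $f\in V$ with $f\neq0$. Since $0<\mu(\supp f)\le\mu_{2m}(S)<\infty$, Lemma \ref{libre} applies and the dilates $\set{\dd_\lambda f}_{\lambda>0}$ are linearly independent, so they span an infinite-dimensional space. The strategy, substituting the dilations $\dd_\lambda$ for the translations and modulations used by Amrein--Berthier, is to pit this infinite family against the finite-dimensionality of $V$. By \eqref{dt} one has $\tt(\dd_\lambda f)=\widehat{\dd}_{1/\lambda}\tt(f)$, so $\dd_\lambda f$ is supported in $\lambda\,\supp f$ while $\tt(\dd_\lambda f)$ is supported in $\lambda^{-1}\Sigma$; hence $\dd_\lambda f$ lies in the equally finite-dimensional space attached to the dilated pair $(\lambda S,\lambda^{-1}\Sigma)$.

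The step I expect to be the main obstacle is precisely this confrontation. Dilation moves both supports, so $\dd_\lambda f$ escapes the fixed space $V$, and one cannot simply exhibit infinitely many independent vectors inside a single finite-dimensional space: collecting the dilated supports into a common finite-measure container only enlarges the sets and raises the dimension bound, and therefore does not by itself close the argument. The resolution I would pursue follows the scheme of Lemma \ref{libre}, passing to the radial variable $r=e^u$ in which $\dd_\lambda$ becomes an ordinary translation, and running a Benedicks-type periodization over a multiplicative lattice $\set{\theta^k}_{k\in\Z}$; the finiteness of $\mu_{2m}(S)$ and $\widehat\mu_{2\widehat m}(\Sigma)$ should then force the periodized object to carry only finitely many modes, which contradicts the linear independence of the dilates of $f$ and yields $f=0$.
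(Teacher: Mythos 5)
Your setup—finite-dimensionality of the space attached to a pair of finite-measure sets via the Hilbert--Schmidt bound \eqref{finite}, plus linear independence of the dilates from Lemma \ref{libre}—matches the paper's proof, and you correctly located the crux: $\dd_\lambda f$ does not stay in the space attached to the fixed pair $(S,\Sigma)$. But at exactly that point you reject the correct resolution for a wrong reason, and what you substitute for it is not a proof. Collecting the dilated supports into a common finite-measure container is precisely how the paper closes the argument, and it does close it: the point you miss is that the infinite sequence of dilation parameters is chosen \emph{once and for all}, so that the container is a single fixed set of finite measure; the dimension bound attached to it is then one fixed finite number, while the number of linearly independent dilates it must contain is infinite. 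Concretely, the maps $\lambda\mapsto\mu_{2m}(S_1\cup\lambda S_0)$ and $\lambda\mapsto\widehat{\mu}_{2\widehat{m}}(\Sigma_1\cup\lambda^{-1}\Sigma_0)$ are continuous on $(0,\infty)$ (write $\mu_{2m}(S_1\cup\lambda S_0)=\norm{\chi_{\lambda S_0}-\chi_{S_1}}^2_{L^2(\Omega,\mu_{2m})}+\scal{\chi_{\lambda S_0},\chi_{S_1}}_{L^2(\Omega,\mu_{2m})}$ and use the continuity of dilation in $L^2$). One can therefore pick recursively distinct parameters $(\lambda_j)_{j\ge 0}$, $\lambda_0=1$, each close enough to $1$ (or to previously chosen values) that the sets $S^*=\bigcup_{j\ge0}\lambda_j S_0$ and $\Sigma^*=\bigcup_{j\ge0}\lambda_j^{-1}\Sigma_0$ satisfy $\mu_{2m}(S^*)<2\mu_{2m}(S_0)$ and $\widehat{\mu}_{2\widehat{m}}(\Sigma^*)<2\widehat{\mu}_{2\widehat{m}}(\Sigma_0)$. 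By \eqref{dt}, every $f_j=\dd_{\lambda_j}f_0$ is supported in $S^*$ with $\tt(f_j)$ supported in $\Sigma^*$, so all the $f_j$ lie in $\Im E_{S^*}\cap\Im F_{\Sigma^*}$, whose dimension is finite by \eqref{hs} and \eqref{finite}; this contradicts Lemma \ref{libre}. So the container's dimension bound does not ``keep rising'' as you feared—it is frozen at the outset.

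Your proposed replacement—passing to $r=e^u$ and running ``a Benedicks-type periodization over a multiplicative lattice''—is a genuine gap, not merely an unfinished detail. Benedicks's periodization argument depends on structure specific to the Euclidean Fourier transform: the exact covariance of translation and modulation and a Poisson-type relation between the periodization of $f$ and samples of its transform. For the operators $\tt$ considered here, the hypotheses are only a polynomially bounded kernel, homogeneity, inversion, and Plancherel; there is no formula relating $\sum_k f(\theta^k x)$ to $\tt(f)$, and nothing forces your periodized object to ``carry only finitely many modes.'' Avoiding any such structure is the very reason the paper follows the Amrein--Berthier scheme with dilations in place of translations. As written, your final step cannot be carried out, whereas the step you discarded is the one that works.
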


\begin{proof}
We will write $E_S \cap F_\Sigma$ for the orthogonal projection onto the intersection of the ranges of $E_S$
 and $ F_\Sigma$ and we denote by $\Im \pp$ the range of a linear operator $\pp$.

First we will need the following elementary fact on Hilbert-Schmidt operators:
\begin{equation}\label{hs}
\dim (\Im E_S \cap \Im F_\Sigma)=\norm{E_S \cap F_\Sigma}_{HS}^2  \leq \norm{ E_S F_\Sigma}_{HS}^2.
\end{equation}
Since $\mu_{2m}(S),\;\mu_{2\widehat{m}}(\Sigma)< \infty$,  from Inequality \eqref{finite} we deduce that
\begin{equation}\label{dimfini}
\dim (\Im E_S \cap \Im F_\Sigma) <\infty.
\end{equation}

Assume now that there exists $f_0 \neq0$ such that
$S_0:= \supp f_0$ and $\Sigma_0:= \supp \tt(f_0)$
have both finite measure $0<\mu_{2m}(S_0),\;\widehat{\mu}_{2\widehat{m}}(\Sigma_0)<\infty$,
thus also $\mu(S_0)< \infty$ so that Lemma \ref{libre} applies.

Next, let $S_1$ (resp. $\Sigma_1$) be a measurable subset of $\Omega$ (resp. $\widehat{\Omega}$) of finite  measure $0<\mu_{2m}(S_1)<\infty$ (resp. $0<\widehat{\mu}_{2\widehat{m}}(\Sigma_1)<\infty$),
such that $S_0\subset S_1$ (resp. $\Sigma_0\subset\Sigma_1$). Since for $\lambda>0$,
$$
\dst\mu_{2m}(S_1\cup \lambda S_0)=\norm { \chi_{\lambda S_0}-\chi_{S_1} }_{L^2(\Omega,\mu_{2m})}^2+
 \scal{ \chi_{\lambda S_0},\chi_{S_1}}_{L^2(\Omega,\mu_{2m})},
$$
 the function $\lambda \mapsto \mu_{2m}(S_1\cup \lambda S_0)$ is continuous on $ \R^+\backslash \{0\}$.
The same holds for $\lambda \mapsto \widehat{\mu}_{2\widehat{m}}(\Sigma_1\cup \lambda \Sigma_0)$.

From  this, one easily deduces that, there exists an infinite sequence of distinct numbers
 $\dst(\lambda_j)_{j=0}^{\infty} \subset \R^+\backslash \{0\}$
with $\lambda_0=1$, such that, if we denote by $S=\dst\bigcup_{j=0}^{\infty} \lambda_jS_0$ and
$\Sigma=\dst\bigcup_{j=0}^{\infty}  \frac{1}{\lambda_j}\Sigma_0$,
\begin{equation*}\label{sequence}
 \mu_{2m}(S)< 2 \mu(S_0), \quad \widehat{\mu}_{2\widehat{m}}(\Sigma)< 2\widehat{\mu}_{2\widehat{m}}(\Sigma_0).
\end{equation*}
We next define $f_i= \dd_{\lambda_i}f_0$, so that $\supp f_i= \lambda_i S_0\subset S$.
Since $ \tt(f_i)=  \lambda_i^{a-\widehat{a}}\widehat{\dd}_{\frac{1}{\lambda_i}}\tt(f_0)$, we have
$\supp \tt(f_i)= \frac{1}{\lambda_i} \Sigma_0\subset\Sigma$.

As 
$\supp f_0$ has finite measure, it follows from 
Lemma \ref{libre} that $(f_i)_{i=0}^\infty$ are linearly independent vectors
belonging to $(\Im E_{S} \cap \Im F_{\Sigma})$, which contradicts \eqref{dimfini}.
\end{proof}

\begin{remarknum}\ \\
{\rm The theorem can be extended to operators $\tt$ that take their values in a finite dimensional
Banach algebra.

The proof given here follows roughly the scheme of Amrein-Berthier's original one in \cite{AB}.
It can obviously be adapted so as to replace dilations by actions of more general groups on measure spaces.
The main difficulty would be to prove that this action leads to linearly independent functions as in Lemma \ref{libre}.
As we have no specific application in mind, we refrain from stating a more general result.}
\end{remarknum}

A simple well known functional analysis argument allows us to obtain the following improvement ({\it see e.g.} \cite[Proposition 2.6]{BD}):

\begin{corollary}\label{str}\ \\
Let $S\subset\Omega$, $\Sigma\subset\widehat{\Omega}$ be a pair of measurable subsets of  finite measure,
$0<\mu_{2m}(S),\, \widehat{\mu}_{2\widehat{m}}(\Sigma)<\infty$. Then there exists a constant $C(S,\Sigma)$ such
that for all $f\in L^2(\Omega,\mu)$,
\begin{equation*}
\|f\|^2_{L^2(\Omega,\mu)}\le C(S,\Sigma)\Big(\|f\|^2_{L^2(S^c,\mu)}
+\|\tt(f)\|^2_{L^2(\Sigma^c,\widehat{\mu})}  \Big).
\end{equation*}
\end{corollary}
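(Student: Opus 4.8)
The plan is to deduce the strong inequality from the already-established weak annihilating property of Theorem~\ref{weak} by controlling the operator norm $\norm{E_SF_\Sigma}$ and then invoking Lemma~\ref{lemma1}. Concretely, I would aim to prove that $\norm{E_SF_\Sigma}<1$; once this is known, Lemma~\ref{lemma1} gives precisely the desired estimate with annihilation constant $C(S,\Sigma)=\bigl(1-\norm{E_SF_\Sigma}\bigr)^{-2}$, since Plancherel's theorem identifies $\norm{F_{\Sigma^c}f}_{L^2(\Omega,\mu)}$ with $\norm{\tt(f)}_{L^2(\Sigma^c,\widehat{\mu})}$ exactly as at the end of the proof of the preceding lemma.

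First I would record the trivial bound $\norm{E_SF_\Sigma}\le1$, valid for any product of two orthogonal projections, and observe that the strict inequality is the entire content of the statement. The leverage is that $E_SF_\Sigma$ is not merely bounded but compact: by \eqref{finite} its Hilbert--Schmidt norm is finite, so the positive self-adjoint operator $A:=(E_SF_\Sigma)^{*}E_SF_\Sigma=F_\Sigma E_S F_\Sigma$ is compact. Consequently $\norm{E_SF_\Sigma}^2=\norm{A}$ is attained as the largest eigenvalue of $A$; in particular, if it equalled $1$, then $1$ would be an eigenvalue of $A$ with a unit eigenvector $g$.

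The next step is to extract a contradiction from such a $g$. From $Ag=g$ and $\norm g=1$ I would compute $\norm{E_SF_\Sigma g}^2=\scal{E_SF_\Sigma g,F_\Sigma g}=\scal{Ag,g}=1$, while the contractivity chain $\norm{E_SF_\Sigma g}\le\norm{F_\Sigma g}\le\norm g=1$ forces every inequality to be an equality. Since $E_S$ and $F_\Sigma$ are orthogonal projections, equality in $\norm{F_\Sigma g}=\norm g$ gives $F_\Sigma g=g$, and then equality in $\norm{E_SF_\Sigma g}=\norm{F_\Sigma g}$ gives $E_Sg=g$. Hence $g\in\Im E_S\cap\Im F_\Sigma$ is nonzero; that is, $g$ is supported in $S$ and $\tt(g)$ is supported in $\Sigma$, contradicting the weak annihilating property of Theorem~\ref{weak}. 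Therefore $\norm{E_SF_\Sigma}<1$, and the corollary follows.

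The main obstacle, and the only place where the finiteness of the measures is genuinely used beyond Theorem~\ref{weak}, is the passage from the qualitative fact $\Im E_S\cap\Im F_\Sigma=\{0\}$ to the quantitative spectral gap $\norm{E_SF_\Sigma}<1$. For general, non-compact orthogonal projections a trivial intersection of ranges does not prevent the norm from equalling $1$, as the two subspaces may close up ``at infinity''; it is exactly the compactness supplied by the Hilbert--Schmidt bound \eqref{finite} that turns the supremum defining $\norm{A}$ into an attained maximum, thereby converting the weak statement into the strong annihilating inequality.
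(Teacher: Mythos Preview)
Your argument is correct. The key observation --- that the Hilbert--Schmidt bound \eqref{finite} makes $E_SF_\Sigma$ compact, so its operator norm is an attained maximum, and that a maximizer with norm $1$ would lie in $\Im E_S\cap\Im F_\Sigma$ --- is a clean and standard way to pass from the weak annihilating property to the strong one. The chain of equalities forcing $F_\Sigma g=g$ and $E_Sg=g$ is justified exactly as you wrote.

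The paper takes a different route. Rather than working with the operator norm of $E_SF_\Sigma$ and Lemma~\ref{lemma1}, it argues by contradiction on the formulation \eqref{defst2}: assuming no bound $D(S,\Sigma)$ exists for functions supported in $S$, it picks a normalized sequence $f_n$ supported in $S$ with $\norm{\tt(f_n)}_{L^2(\Sigma^c,\widehat\mu)}\to0$, passes to a weak limit $f$, and then uses the pointwise convergence of $\tt(f_n)(\xi)=\scal{f_n,\overline{E_S\kk(\cdot,\xi)}}$ together with the domination $|\tt(f_n)(\xi)|^2\le c_\tt^2\mu_{2m}(S)(1+|\xi|)^{2\widehat m}$ to show $E_\Sigma\tt(f_n)\to E_\Sigma\tt(f)$ in $L^2$, hence $f$ is a nonzero element of $\Im E_S\cap\Im F_\Sigma$. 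Your spectral argument is more direct and reuses \eqref{finite} and Lemma~\ref{lemma1} efficiently; the paper's argument instead exploits the kernel bound and dominated convergence explicitly, which makes the role of the finiteness hypotheses more visible but requires a slightly more delicate convergence analysis.
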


\begin{proof}
Assume there is no such constant $D(S,\Sigma)$ such that for every function $f\in L^2(\Omega,\mu)$ supported in $S$,
$$
 \|f\|^2_{L^2(\Omega,\mu)}\le D(S,\Sigma)\|\tt(f)\|^2_{L^2(\Sigma^c,\widehat{\mu})}.
$$
Then, there exists a sequence $f_n \in L^2(\Omega,\mu)$ with $\norm{f_n}_{L^2(\Omega,\mu)}=1$
and with support in $S$ such that $\norm{E_{\Sigma^c}\tt(f_n) }_{L^2(\widehat{\Omega},\widehat{\mu})}$ converge to $0$.
Moreover, we may assume that $f_n$ is weakly convergent in
$L^2(\Omega,\mu)$ with some limit $f$. As $\tt(f_n)(\xi)$ is the scalar product of $f_n$
and $\overline{E_S\kk(\cdot,\xi)}$, it follows that $\tt(f_n)$ converge to $\tt(f)$. Finally, as
$|\tt(f_n)(\xi)|^2$ is bounded by $c_{\tt}^2\mu_{2m}(S)(1+|\xi|)^{2\widehat{m}}$, we
may apply Lebesgue's theorem, thus $E_\Sigma \tt(f_n)$ converges to $E_\Sigma\tt(f)$
in $L^2(\widehat{\Omega},\widehat{\mu})$.
But we have $\supp f \subset S$ and $\supp \tt(f)\subset\Sigma$ so by Theorem \ref{weak},
$f$ is $0$, which contradicts the fact that $f$ has norm $1$.
\end{proof}

Now we will show a global uncertainty inequality type for the transformation $\tt$. But this time we will use Corollary \ref{str} and the proof here
 is simpler than that using the local uncertainty principle and not necessary with the same constant.

\begin{corollary}\ \\
Let $s,\;\beta>0$. Then there exists a constant 
$C=C(s,\beta,a)$ such that for all $f\in L^2(\Omega,\mu)$,
\begin{equation}\label{eqhe}
\big\||x|^{s}f\big\|^{\frac{2\beta}{s+\beta}}_{L^2(\Omega,\mu)} \;
\big\||\xi|^{\beta}\tt(f)\big\|^{\frac{2s}{s+\beta}}_{L^2(\widehat{\Omega},\widehat{\mu})}
\ge C \norm{f}^{2}_{L^2(\Omega,\mu)}.
\end{equation}
\end{corollary}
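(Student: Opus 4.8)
**The plan is to deduce the global inequality \eqref{eqhe} from the strong annihilating property of balls, following the same minimization-over-scales idea used in the earlier corollary.** The key observation is that Corollary \ref{str} applies to any pair of sets of finite measure, and balls centered at the origin have finite weighted measure. So first I would fix $r>0$ and take $S=\{|x|\le r\}\cap\Omega$ and $\Sigma=\{|\xi|\le r\}\cap\widehat{\Omega}$. These have $\mu_{2m}(S),\widehat{\mu}_{2\widehat m}(\Sigma)<\infty$ by the polar decomposition of the measure. However, a direct application would produce a constant $C(S,\Sigma)$ depending on $r$ in an uncontrolled way, so the cleaner route is to avoid Corollary \ref{str} and instead argue directly, exactly as in the proof of the earlier corollary but without the restriction $m=\widehat m=0$.

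More concretely, I would write, using Plancherel's theorem,
\begin{equation*}
\norm{f}^2_{L^2(\Omega,\mu)}=\norm{\tt(f)}^2_{L^2(B_r,\widehat{\mu})}+\norm{\tt(f)}^2_{L^2(B_r^c,\widehat{\mu})},
\end{equation*}
where $B_r=\widehat{\Omega}\cap\{|\xi|\le r\}$. The second term is controlled by the dispersion on the frequency side: on $B_r^c$ one has $|\xi|^{-\beta}\le r^{-\beta}$, so $\norm{\tt(f)}^2_{L^2(B_r^c,\widehat{\mu})}\le r^{-2\beta}\big\||\xi|^\beta\tt(f)\big\|^2_{L^2(\widehat{\Omega},\widehat{\mu})}$. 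The first term is the one where I bound the ``low-frequency'' part of $\tt(f)$ by the spatial dispersion of $f$. Here I would invoke Theorem \ref{prop:local.}, applied with $\Sigma=B_r$, choosing the regime ($0<s<a$, $a\le s\le a+m$, or $s>a+m$) according to the given $s$, and express $\widehat{\mu}_{2\widehat m}(B_r)$ as a power of $r$ via the homogeneity of $\widehat{\mu}$. This yields $\norm{\tt(f)}^2_{L^2(B_r,\widehat{\mu})}\le c\,r^{2s}\big\||x|^s f\big\|^2_{L^2(\Omega,\mu)}$ (in the case $s<a$) or an analogous interpolated bound (in the case $s>a$), mirroring the two branches already carried out in the earlier corollary.

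**The main obstacle is the interplay between the weighted measures $\mu_{2m}$, $\widehat\mu_{2\widehat m}$ and the unweighted norms appearing in \eqref{eqhe}, together with the need to keep the constant independent of $r$.** In the earlier corollary the simplification $m=\widehat m=0$ made $\widehat{\mu}_{2\widehat m}=\widehat\mu$ scale cleanly as $r^{2a}$, so the minimization over $r$ of an expression of the form $c'r^{2s}A^2+r^{-2\beta}B^2$ produced exactly the homogeneous product $A^{2\beta/(s+\beta)}B^{2s/(s+\beta)}$. For general $m$ one has $\widehat{\mu}_{2\widehat m}(B_r)\sim r^{2a}$ as well, since the weight $(1+|\xi|)^{2\widehat m}$ only affects the constant, not the leading scaling; the same holds for the spatial side, so the scaling structure survives. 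I would therefore carry out the identical minimization: set the derivative in $r$ to zero to find the optimal $r$ as a ratio of the two dispersions, substitute back, and read off the product bound. In the regime $s>a$ I would additionally use the interpolation step from the earlier proof, writing $\norm{\tt(f)}^2_{L^2(B_r^c,\widehat{\mu})}$ via Hölder and Plancherel to factor out $\norm{f}^{2-2a/s}_{L^2(\Omega,\mu)}$, so that after cancellation the exponent bookkeeping again produces \eqref{eqhe}. The only genuinely delicate point is verifying that the weighted-measure corrections do not disturb the exponents $2\beta/(s+\beta)$ and $2s/(s+\beta)$, which I expect follows because those exponents are forced purely by the dilation scaling $\tt\dd_\lambda=\widehat{\dd}_{1/\lambda}\tt$ and hence are robust to the lower-order polynomial weights.
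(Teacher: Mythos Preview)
Your decision to abandon Corollary~\ref{str} is exactly the wrong move. You correctly observe that applying it with balls of radius $r$ would produce a constant $C(S,\Sigma)$ depending on $r$ in an uncontrolled way; the paper's remedy is not to abandon the corollary but to apply it \emph{once} at the fixed scale $r=1$. This gives a fixed constant $C=C(B_1,\widehat{B}_1)$ and, since $|x|\ge 1$ on $B_1^c$ and $|\xi|\ge 1$ on $\widehat{B}_1^c$,
\[
\norm{f}_{L^2(\Omega,\mu)}^2 \le C\Bigl(\bigl\||x|^s f\bigr\|_{L^2(\Omega,\mu)}^2 + \bigl\||\xi|^\beta\tt(f)\bigr\|_{L^2(\widehat{\Omega},\widehat{\mu})}^2\Bigr).
\]
Now replace $f$ by $\dd_\lambda f$ and use $\tt\dd_\lambda=\widehat{\dd}_{1/\lambda}\tt$: the left side is unchanged ($\dd_\lambda$ is an $L^2$-isometry), while the right becomes $C\bigl(\lambda^{2s}\bigl\||x|^s f\bigr\|^2 + \lambda^{-2\beta}\bigl\||\xi|^\beta\tt(f)\bigr\|^2\bigr)$. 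Minimizing over $\lambda$ yields \eqref{eqhe} in one stroke, for all $s,\beta>0$, with no case analysis and no weighted measures intervening. Your closing remark that the exponents are forced by the dilation relation is precisely this argument---but it should be the \emph{whole} proof, not a cleanup step after the fact.

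By contrast, your proposed route through Theorem~\ref{prop:local.} runs into genuine obstacles that you underestimate. Your claim that $\widehat{\mu}_{2\widehat{m}}(B_r)\sim r^{2a}$ is false for large $r$: the weight $(1+|\xi|)^{2\widehat{m}}$ contributes a factor $\sim r^{2\widehat{m}}$ there, so $\widehat{\mu}_{2\widehat{m}}(B_r)\sim r^{2(a+\widehat{m})}$, which alters the power of $r$ emerging from the minimization. Moreover, in the range $a\le s\le a+m$ Theorem~\ref{prop:local.} carries an unavoidable $\varepsilon$-loss, and for $s>a+m$ with $m\neq 0$ it returns $\norm{f}_{L^2(\Omega,\mu_{2s})}$ rather than the interpolated product $\norm{f}^{1-a/s}\bigl\||x|^s f\bigr\|^{a/s}$, so the ``identical minimization'' you describe from the earlier corollary is simply not available in those regimes.
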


\begin{proof}
Let $B_1=\Omega\cap\{x\,:|x|\leq 1\}$ and $\widehat{B}_1=\widehat{\Omega}\cap\{\xi\,:|\xi|\leq 1\}$.
Let $B_1^c=\Omega\setminus B_1$ and $\widehat{B}^c_1=\widehat{\Omega}\setminus\widehat{B}_1$.

From Corollary \ref{str} there exists a constant $C=C(B_1,\widehat{B_1})$ such that
$$
\|f\|_{L^2(\Omega,\mu)}^2\le C\left(\|f\|_{L^2(B_1^c,\mu)}^2 +\|\tt(f)\|_{L^2(\widehat{B}^c_1,\widehat{\mu})}^2\right).
$$
It follows then
\begin{eqnarray*}
\|f\|_{L^2(\Omega,\mu)}^2 &\le&  C\left(\big\||x|^sf\big\|_{L^2(B_1^c,\mu)}^2
+\big\||\xi|^\beta\tt(f)\big\|_{L^2(\widehat{B}_1^c,\widehat{\mu})}^2 \right)\\
&\le& C \left(\big\||x|^sf\big\|_{L^2(\Omega,\mu)}^2
+\big\||\xi|^\beta\tt(f)\big\|_{L^2(\widehat{\Omega},\widehat{\mu})}^2 \right).
\end{eqnarray*}
Replacing $f$ by $\dd_\lambda f$ in the last inequality we have by \eqref{dt}
\begin{equation}\label{eqd}
\big\|\dd_\lambda f\big\|_{L^2(\Omega,\mu)}^2\le C \left(\big\||x|^s \dd_\lambda f\big\|_{L^2(\Omega,\mu)}^2
+
\big\||\xi|^\beta \widehat{\dd}_{\frac{1}{\lambda}}\tt(f)\big\|_{L^2(\widehat{\Omega},\widehat{\mu})}^2 \right),
\end{equation}
which gives
$$
\big\|f\big\|_{L^2(\Omega,\mu)}^2\le C \left(\lambda^{2s}\big\||x|^s f\big\|_{L^2(\Omega,\mu)}^2
+ \lambda^{-2\beta} \big\||\xi|^\beta \tt(f)\big\|_{L^2(\widehat{\Omega},\widehat{\mu})}^2 \right).
$$
The desired result follows by minimizing the right hand side of that inequality over $\lambda > 0$.
\end{proof}

Let us notice that Theorem \ref{weak} is valid in the $L^1$-version. Precisely we have the following proposition:

\begin{proposition}\ \\
\label{weak2}
Let $S\subset\Omega$, $\Sigma\subset\widehat{\Omega}$ be a pair of measurable subsets with $0<\mu_{2m}(S),\widehat{\mu}_{2\widehat{m}}(\Sigma)<\infty$.
Suppose $f\in L^1(\Omega,\mu_m)$ 
(in particular $f\in L^1(\Omega,\mu)$) verifies $\supp f\subset S$ and $\supp\tt(f)\subset\Sigma$, then $f=0$.
\end{proposition}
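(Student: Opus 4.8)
The plan is to reduce this $L^1$ statement to the $L^2$ result already established in Theorem \ref{weak}, the bridge being the Inversion Formula. Once we know that the hypotheses force $f$ to lie in $L^2(\Omega,\mu)$, the conclusion $f=0$ is immediate from Theorem \ref{weak}, since $f$ is then an $L^2$ function supported in $S$ whose transform is supported in $\Sigma$, both of finite measure. So the whole content of the proposition is to upgrade the regularity of $f$ from $L^1(\Omega,\mu_m)$ to $L^2(\Omega,\mu)$, and the natural engine for this is the smoothing provided by inversion.

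First I would record that, because $f\in L^1(\Omega,\mu_m)$, the continuity of $\tt$ from $L^1(\Omega,\mu_m)$ to $\cc_{\widehat{m}}(\widehat{\Omega})$ gives $|\tt(f)(\xi)|\le\norm{\tt(f)}_{\infty,\widehat{m}}(1+|\xi|)^{\widehat{m}}$. Since $\supp\tt(f)\subset\Sigma$ and $\widehat{\mu}_{2\widehat{m}}(\Sigma)<\infty$, this bound yields
\begin{equation*}
\int_{\widehat{\Omega}}|\tt(f)(\xi)|(1+|\xi|)^{\widehat{m}}\d\widehat{\mu}(\xi)
=\int_\Sigma|\tt(f)(\xi)|(1+|\xi|)^{\widehat{m}}\d\widehat{\mu}(\xi)
\le\norm{\tt(f)}_{\infty,\widehat{m}}\,\widehat{\mu}_{2\widehat{m}}(\Sigma)<\infty,
\end{equation*}
i.e.\ $\tt(f)\in L^1(\widehat{\Omega},\widehat{\mu}_{\widehat{m}})$. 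Now both $f\in L^1(\Omega,\mu_m)$ and $\tt(f)\in L^1(\widehat{\Omega},\widehat{\mu}_{\widehat{m}})$ hold, so the Inversion Formula applies and gives $f\in\cc_m(\Omega)$; in particular $f$ is continuous with $|f(x)|\le\norm{f}_{\infty,m}(1+|x|)^m$. Combining this with $\supp f\subset S$ and $\mu_{2m}(S)<\infty$,
\begin{equation*}
\norm{f}^2_{L^2(\Omega,\mu)}=\int_S|f(x)|^2\d\mu(x)
\le\norm{f}^2_{\infty,m}\int_S(1+|x|)^{2m}\d\mu(x)
=\norm{f}^2_{\infty,m}\,\mu_{2m}(S)<\infty,
\end{equation*}
so $f\in L^2(\Omega,\mu)$, as wanted.

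It then remains to invoke Theorem \ref{weak}. The one point requiring care is that the integral-operator value $\tt(f)$ used throughout coincides with the value of the $L^2$-unitary extension of $\tt$ on $f$, so that the hypothesis $\supp\tt(f)\subset\Sigma$ is also valid for the $L^2$-transform entering Theorem \ref{weak}. This is a standard consistency fact: approximating $f\in L^1(\Omega,\mu_m)\cap L^2(\Omega,\mu)$ by functions of $\ss(\Omega)$ simultaneously in the $L^1(\Omega,\mu_m)$ and $L^2(\Omega,\mu)$ norms (by truncation and mollification), and using that the two extensions of $\tt$ are continuous into $\cc_{\widehat{m}}(\widehat{\Omega})$ and $L^2(\widehat{\Omega},\widehat{\mu})$ respectively, identifies the two limits. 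Granting this, Theorem \ref{weak} forces $f=0$. I expect this last consistency/book-keeping step --- checking that the two descriptions of $\tt(f)$ agree and that the approximation can be carried out in both norms at once --- to be the only genuinely delicate point; the integrability estimates and the passage through the Inversion Formula are routine.
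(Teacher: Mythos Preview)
Your proof is correct and follows essentially the same route as the paper: show $\tt(f)\in L^1(\widehat{\Omega},\widehat{\mu}_{\widehat{m}})$ from the support constraint and the bound $|\tt(f)(\xi)|\le\norm{\tt(f)}_{\infty,\widehat{m}}(1+|\xi|)^{\widehat{m}}$, invoke the Inversion Formula to get $f\in\cc_m(\Omega)$, deduce $f\in L^2(\Omega,\mu)$, and conclude via Theorem~\ref{weak}. The only cosmetic difference is in the final $L^2$ estimate: the paper writes $\norm{f}_{L^2}^2\le\norm{(1+|x|)^{-m}f}_{L^\infty}\norm{f}_{L^1(\Omega,\mu_m)}$ instead of your $\norm{f}_{L^2}^2\le\norm{f}_{\infty,m}^2\,\mu_{2m}(S)$, and the paper silently takes for granted the consistency of the integral and $L^2$ definitions of $\tt(f)$ that you (rightly) flag.
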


\begin{proof}
If $f\in L^1(\Omega,\mu_m)$, then $(1+|\xi|)^{-\widehat{m}}\tt (f) \in L^\infty(\widehat{\Omega},\widehat{\mu})$.
Then
$$
\norm{\tt(f)}_{L^1(\widehat{\Omega},\widehat{\mu}_{\widehat{m}})}
=\norm{ \chi_\Sigma \tt(f)}_{L^1(\widehat{\Omega},\widehat{\mu}_{\widehat{m}})}
\le \widehat{\mu}_{2\widehat{m}}(\Sigma)\norm{(1+|\xi|)^{-\widehat{m}}\tt(f)}_{L^\infty(\widehat{\Omega},\widehat{\mu})}<\infty.
$$
This implies that $\tt(f) \in L^1(\widehat{\Omega},\widehat{\mu}_{\widehat{m}})$,
thus $(1+|x|)^{-m}f\in L^\infty(\Omega,\mu)$.
Finally,
\begin{eqnarray*}
\norm{f}_{L^2(\Omega,\mu)}^2&=&\int_\Omega (1+|x|)^{-m}|f(x)||f(x)|(1+|x|)^m\,\mbox{d}\mu(x)\\
&\leq& \norm{(1+|x|)^{-m}f}_{L^\infty(\Omega,\mu)}\norm{f}_{L^1(\Omega,\mu_m)}<\infty,
\end{eqnarray*}
hence $f \in L^2(\Omega,\mu)$. By  Theorem \ref{weak}
we have $f=0$.
\end{proof}

The same argument as the one used in the proof of Corollary \ref{str} gives the following result\,:

\begin{proposition}\ \\
Let $S\subset\Omega$, $\Sigma\subset\widehat{\Omega}$ be a pair of measurable subsets with $0<\mu_{2m}(S),\widehat{\mu}_{2\widehat{m}}(\Sigma)<\infty$.
Then there exists a constant
$D(S,\Sigma)$ such that for all function $f\in L^1(\Omega,\mu)$ supported in $S$,
\begin{equation*}
 \|f\|_{L^1(\Omega,\mu)}\le D(S,\Sigma) \|\tt(f)\|_{L^1(\Sigma^c,\widehat{\mu})}.
\end{equation*}
\end{proposition}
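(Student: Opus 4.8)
The plan is to run the contradiction argument of Corollary~\ref{str}, with Proposition~\ref{weak2} playing the role that Theorem~\ref{weak} played there. First I would reduce the statement to a slightly stronger one. Since $\tt(f)$ is only meaningful once $\int_\Omega|f|(1+|x|)^m\d\mu<\infty$, the natural class is $f\in L^1(\Omega,\mu_m)$ supported in $S$; note that $\mu_{2m}(S)<\infty$ gives $\mu_m(S)\le\mu_{2m}(S)<\infty$. Because $(1+|x|)^m\ge1$ we have $\|f\|_{L^1(\Omega,\mu)}\le\|f\|_{L^1(\Omega,\mu_m)}$, so it is enough to produce a constant $D(S,\Sigma)$ with $\|f\|_{L^1(\Omega,\mu_m)}\le D(S,\Sigma)\|\tt(f)\|_{L^1(\Sigma^c,\widehat{\mu})}$ for every such $f$; this implies the asserted inequality a fortiori.

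Suppose this fails for every constant. Then there is a sequence $f_n\in L^1(\Omega,\mu_m)$ with $\supp f_n\subset S$, $\|f_n\|_{L^1(\Omega,\mu_m)}=1$, and $\|\tt(f_n)\|_{L^1(\Sigma^c,\widehat{\mu})}\to0$. Writing $\tt(f_n)(\xi)=\int_S f_n(x)\kk(x,\xi)\d\mu(x)$ and pairing $f_n\d\mu_m$ against the bounded continuous function $(1+|x|)^{-m}\kk(x,\xi)$, the polynomial bound on $\kk$ yields the uniform estimate $|\tt(f_n)(\xi)|\le c_\tt(1+|\xi|)^{\widehat{m}}$, and $(1+|\xi|)^{\widehat{m}}\le(1+|\xi|)^{2\widehat{m}}$ is $\widehat{\mu}$-integrable over the finite-measure set $\Sigma$. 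I would then extract a subsequence converging to a limit $f$ with $\supp f\subset S$; the pairing formula gives $\tt(f_n)(\xi)\to\tt(f)(\xi)$ pointwise, Lebesgue's dominated convergence theorem (justified by the uniform bound) gives $\chi_\Sigma\tt(f_n)\to\chi_\Sigma\tt(f)$ in $L^1(\Sigma,\widehat{\mu})$, and combining this with $\|\tt(f_n)\|_{L^1(\Sigma^c,\widehat{\mu})}\to0$ forces $\supp\tt(f)\subset\Sigma$. If $f\neq0$, this contradicts Proposition~\ref{weak2}, and the desired constant exists.

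The main obstacle is precisely the extraction step, which in the $L^2$ setting of Corollary~\ref{str} came for free from weak compactness of the unit ball and in which nonvanishing of the limit was guaranteed by Plancherel's identity. Here $L^1$ is not reflexive: a priori the sequence $f_n\d\mu_m$ only has a weak-$\ast$ limit in the space of finite measures, and three pathologies must be excluded. The limit could carry a singular part, to which Proposition~\ref{weak2} does not apply; mass could escape to infinity, since $S$ need only have finite $\mu_{2m}$-measure and may well be unbounded while the test functions $(1+|x|)^{-m}\kk(x,\cdot)$ need not decay; and the total mass could vanish, leaving $f=0$. The remedy I would pursue combines two ingredients: the concentration constraint $\|\tt(f_n)\|_{L^1(\Sigma^c,\widehat{\mu})}\to0$ together with $\widehat{\mu}_{2\widehat{m}}(\Sigma)<\infty$ should force $\{f_n\}$ to be uniformly integrable and tight, so that the Dunford--Pettis theorem returns a genuine weak-$L^1(\Omega,\mu_m)$ limit $f$, automatically an $L^1$ function supported in $S$; and the inversion formula --- already the engine behind Proposition~\ref{weak2} --- should supply the lower bound on $\|\chi_\Sigma\tt(f)\|_{L^1(\Sigma,\widehat{\mu})}$ that prevents $f$ from vanishing. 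Carrying out these two verifications, which in the $L^2$ proof were replaced respectively by reflexive weak compactness and by Plancherel's identity, is where the real work of the argument lies.
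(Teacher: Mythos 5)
Your reduction to the weighted class $L^1(\Omega,\mu_m)$ (without which $\tt(f)$ is not even defined) and your diagnosis of why the $L^2$ argument does not transplant verbatim are both correct; for what it is worth, the paper itself offers nothing more than ``the same argument as the one used in the proof of Corollary \ref{str}'', so it is silent on exactly the points you raise. But what you have written is a plan, not a proof: the two verifications you defer --- Dunford--Pettis compactness of $\{f_n\}$ and nonvanishing of the limit --- are precisely the content of the proposition, and the route you propose for the first is doubtful. To rule out concentration (a weak-$\ast$ limit of $f_n\,\d\mu_m$ carrying an atom at some $x_0$) you would need to know that no kernel slice $\kk(x_0,\cdot)$ is $\widehat{\mu}$-a.e.\ supported in $\Sigma$, and to rule out escape of mass to infinity you would need some quantitative non-degeneracy of $\kk(x,\cdot)$ as $|x|\to\infty$; neither follows obviously from the paper's axioms (continuity, polynomial bound, homogeneity, Plancherel, inversion). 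So the claim that $\norm{\tt(f_n)}_{L^1(\Sigma^c,\widehat{\mu})}\to0$ ``should force'' uniform integrability and tightness is an essential missing idea, not a routine check, and the same goes for the unspecified lower bound that is to keep the limit from vanishing.

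The step that actually completes the argument is to apply compactness on the other side of the transform, where you already observed a uniform bound. With $\norm{f_n}_{L^1(\Omega,\mu_m)}=1$ and $g_n=\tt(f_n)$, the sequence $(1+|\xi|)^{-\widehat{m}}g_n$ is bounded in $L^\infty(\widehat{\Omega},\widehat{\mu})=\bigl(L^1(\widehat{\Omega},\widehat{\mu})\bigr)^*$, so Banach--Alaoglu applies there; the hypothesis $\norm{g_n}_{L^1(\Sigma^c,\widehat{\mu})}\to0$ forces the weak-$\ast$ limit $g$ (after restoring the weight) to vanish a.e.\ off $\Sigma$. Splitting $f_n=\tt^{-1}(\chi_\Sigma g_n)+\tt^{-1}(\chi_{\Sigma^c}g_n)$ by the inversion formula, the first term converges pointwise to $F:=\tt^{-1}(\chi_\Sigma g)$ (test weak-$\ast$ convergence against $\chi_\Sigma(1+|\xi|)^{\widehat{m}}\overline{\kk(x,\cdot)}\in L^1(\widehat{\Omega},\widehat{\mu})$, using $\widehat{\mu}_{2\widehat{m}}(\Sigma)<\infty$) with the uniform bound $c_\tt^2\widehat{\mu}_{2\widehat{m}}(\Sigma)(1+|x|)^m$, while the second tends to $0$ (here, and in the appeal to inversion, one needs the $\Sigma^c$-hypothesis in the weighted norm $\widehat{\mu}_{\widehat{m}}$, which is automatic when $\widehat{m}=0$). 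Since everything is supported in $S$ and $\int_S(1+|x|)^m\d\mu_m=\mu_{2m}(S)<\infty$, dominated convergence --- replacing both Dunford--Pettis and Plancherel --- yields $f_n\to F$ in $L^1(\Omega,\mu_m)$, hence $\norm{F}_{L^1(\Omega,\mu_m)}=1$, $\supp F\subset S$ and $\supp\tt(F)\subset\Sigma$, contradicting Proposition \ref{weak2}. In short, the faithful adaptation keeps the contradiction scheme you describe but moves the compactness across the transform; keeping it on the function side, as your proposal does, is the one place the argument cannot be made to work.
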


\section{Examples}
\subsection{The Fourier transform and the Fourier-Bessel transform}\

Let $\d\mu(x)=(2\pi)^{-d/2}\d x$ the Lebesgue measure and $\tt=\ff$ the Fourier transform. For $f\in L^1(\R^d,\mu)\cap L^2(\R^d,\mu)$, the Fourier transform is defined by
$$
\ff(f)(\xi)=\int_{\R^d}f(x) e^{-i\scal{x,\xi}} \d\mu(x), \hspace{0,5cm} \xi\in \R^d;
$$
and is then extended to all $L^2(\R^d,\mu)$ in the usual way. In this case we take $c_{\tt}=1$, $a=d/2$  and
$m=\widehat{m}=0$. Then \eqref{eqdseps}
is Donoho-Stark's theorem \cite{DS, hl}, Corollary \ref{str} is Amrein-Berthier's theorem \cite{AB} while 
the local and the global uncertainty principles for the Fourier transform date back respectively to
\cite{Pr, Prr} and \cite{cp}.
Note that our proof of Theorem \ref{weak} is inspired by the one established in \cite{AB} where we replace
translation by dilation.

If $f(x)=f_0(|x|)$ is a radial function on $\R^d$, then
$$
\ff(f)(\xi)=\frac{1}{2^{d/2-1}\Gamma(d)}\int_0^\infty f_0(t)j_{d/2-1}(t|\xi|)t^{d-1}\d t=\ff_{d/2-1}(f_0)(|\xi|),
$$
where $\ff_{d/2-1}$ is the Fourier-Bessel transform of index $d/2-1$. For $\alpha\ge-1/2$, $j_\alpha$ is  the Bessel function given by
$$
j_\alpha(x)=2^\alpha\Gamma(\alpha+1)
\frac{J_\alpha (x)}{x^\alpha}:=\Gamma(\alpha+1)\sum_{n=0}^{\infty}\frac{(-1)^n}{n!\Gamma(n+\alpha+1)}\left(\frac{x}{2}\right)^{2n},
$$
where $\Gamma$ is the gamma function.

We have $|j_\alpha|\le 1$ and if we denote $\d \mu_\alpha (x)=\frac{1}{2^\alpha\Gamma(\alpha+1)}x^{2\alpha+1}\,\d x$, then
for $f\in L^1(\R^+, \mu_\alpha)\cap L^2(\R^+,\mu_\alpha)$, the Fourier-Bessel (or Hankel) transform is defined by
$$
\ff_\alpha(f) (\xi)=\int_0^\infty f(x)j_\alpha(x\xi)\d\mu_\alpha (x),\ \ \xi\in \R^+;
$$
and extends to an isometric isomorphism on $ L^2(\R^+,\mu_\alpha)$ with $\ff_\alpha^{-1}=\ff_\alpha$. Theorem $A$ and  Theorem $B$ has been stated  in \cite{GJ} for this transformation.
Moreover we have the following two new results.

\begin{theorem}[Donoho-Stark's uncertainty principle for $\ff_\alpha$]\ \\
Let $S$, $\Sigma$  be a pair of measurable subsets of $\R^+$ and $\alpha>-1/2$.
If $f\in L^2(\R^+,\mu_\alpha)$ of unit $L^2$-norm is $\eps_1$-concentrated on $S$ and $\eps_2$-bandlimited on
$\Sigma$ for the the Fourier-Bessel transform, then
\begin{equation}
\mu_\alpha(S)\mu_\alpha(\Sigma)\ge\left(1-\sqrt{\eps_1^2+\eps_2^2}\right)^2
\quad\mbox{and}\quad
|S||\Sigma|\ge c_\alpha\left(1-\sqrt{\eps_1^2+\eps_2^2}\right)^2,
\end{equation}
where $c_\alpha$ is a numerical constant that depends only on $\alpha$.
\end{theorem}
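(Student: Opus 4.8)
The plan is to prove the two inequalities separately: the first is an immediate specialization of the abstract theory, while the second requires sharpening the Hilbert--Schmidt estimate by exploiting the asymptotics of Bessel functions. First I would observe that for $\ff_\alpha$ one has $c_\tt=1$, $m=\widehat m=0$ and $\mu=\widehat\mu=\mu_\alpha$, so that $\mu_{2m}=\widehat\mu_{2\widehat m}=\mu_\alpha$. Inequality \eqref{eqdseps} then reads directly
$$
\mu_\alpha(S)\mu_\alpha(\Sigma)\ge\left(1-\sqrt{\eps_1^2+\eps_2^2}\right)^2,
$$
which is the first assertion.

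For the second inequality I would return to the computation \eqref{finite} of the Hilbert--Schmidt norm, which in the present setting reads $\|E_SF_\Sigma\|_{HS}^2=\int_S\int_\Sigma|j_\alpha(x\xi)|^2\d\mu_\alpha(\xi)\d\mu_\alpha(x)$. Rather than using the crude bound $|j_\alpha|\le1$, I would use the relation $j_\alpha(t)=2^\alpha\Gamma(\alpha+1)J_\alpha(t)/t^\alpha$ to rewrite the integrand against Lebesgue measure:
$$
|j_\alpha(x\xi)|^2\,x^{2\alpha+1}\xi^{2\alpha+1}=\left(2^\alpha\Gamma(\alpha+1)\right)^2 (x\xi)\,J_\alpha(x\xi)^2.
$$
The crucial ingredient is the uniform bound $\kappa_\alpha:=\sup_{t>0}t\,J_\alpha(t)^2<\infty$, which follows from the standard asymptotics of $J_\alpha$: near the origin $J_\alpha(t)\sim\frac{1}{\Gamma(\alpha+1)}(t/2)^\alpha$, so $t\,J_\alpha(t)^2=O(t^{2\alpha+1})\to0$ as $t\to0$ because $2\alpha+1>0$ for $\alpha>-1/2$; for large $t$ one has $J_\alpha(t)=O(t^{-1/2})$, so $t\,J_\alpha(t)^2=O(1)$; and $t\mapsto t\,J_\alpha(t)^2$ is continuous on $(0,\infty)$. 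Consequently
$$
\|E_SF_\Sigma\|_{HS}^2\le\frac{\kappa_\alpha}{\left(2^\alpha\Gamma(\alpha+1)\right)^2}\,|S|\,|\Sigma|=:\frac{1}{c_\alpha}\,|S|\,|\Sigma|,
$$
so that $\|E_SF_\Sigma\|\le\|E_SF_\Sigma\|_{HS}\le\sqrt{c_\alpha^{-1}|S||\Sigma|}$.

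With this bound replacing $c_\tt\sqrt{\mu_{2m}(S)\widehat\mu_{2\widehat m}(\Sigma)}$, I would run the same dichotomy as in the Remark leading to \eqref{eqdseps}: if $\|E_SF_\Sigma\|\ge1$ then $|S||\Sigma|\ge c_\alpha\ge c_\alpha(1-\sqrt{\eps_1^2+\eps_2^2})^2$, whereas if $\|E_SF_\Sigma\|<1$ then Lemma \ref{lemma1} yields $1-\sqrt{c_\alpha^{-1}|S||\Sigma|}\le1-\|E_SF_\Sigma\|\le\sqrt{\eps_1^2+\eps_2^2}$. In both cases this gives $|S||\Sigma|\ge c_\alpha\left(1-\sqrt{\eps_1^2+\eps_2^2}\right)^2$ with $c_\alpha=\left(2^\alpha\Gamma(\alpha+1)\right)^2/\kappa_\alpha$ depending only on $\alpha$, as claimed. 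The only nonroutine step is the uniform Bessel estimate $\sup_{t>0}t\,J_\alpha(t)^2<\infty$, which is classical; everything else is a transcription of the Donoho--Stark argument already carried out for the abstract operator $\tt$.
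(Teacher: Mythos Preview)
Your approach is correct and is exactly what the paper has in mind. The paper does not give an explicit proof of this theorem: the first inequality is presented as the specialization of the abstract Donoho--Stark inequality \eqref{eqdseps} to the Fourier--Bessel setting (with $c_\tt=1$, $m=\widehat m=0$), which is precisely your first step; for the second inequality the paper gives no details, and your sharpening of the Hilbert--Schmidt estimate via the classical bound $\sup_{t>0}t\,J_\alpha(t)^2<\infty$ is the natural and correct argument.

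One small slip: with the paper's normalization $\d\mu_\alpha(x)=\frac{1}{2^\alpha\Gamma(\alpha+1)}x^{2\alpha+1}\d x$, the factors $(2^\alpha\Gamma(\alpha+1))^2$ coming from $j_\alpha$ cancel exactly with those from the two measures, so the Hilbert--Schmidt bound reads simply $\|E_SF_\Sigma\|_{HS}^2\le\kappa_\alpha\,|S|\,|\Sigma|$ and hence $c_\alpha=1/\kappa_\alpha$, not $(2^\alpha\Gamma(\alpha+1))^2/\kappa_\alpha$. This does not affect the validity of the argument, since the theorem only asserts the existence of some constant depending on $\alpha$.
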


This result improves the estimate in \cite{karoui} (which has already improved \cite{tuan})
showing that, if $f$ of unit $L^2$-norm is $\eps_1$-concentrated on $S$ and $\eps_2$-bandlimited on $\Sigma$, then
\begin{equation*}
 |S||\Sigma|\ge c'_\alpha(1-\eps_1-\eps_2)^2.
\end{equation*}

\begin{theorem}[Global uncertainty principle for $\ff_\alpha$]\ \\
For $s,\;\beta>0$, there exists a constant $C_{s,\beta,\alpha}$ such that for all $f\in L^2(\R^+,\mu_\alpha)$,
  \begin{equation*}
    \big\|x^{s}f\big\|^{\frac{2\beta}{s+\beta}}_{L^2(\R^+,\mu_\alpha)} \; \big\|\xi^{\beta}\ff_\alpha(f)\big\|^{\frac{2s}{s+\beta}}_{L^2(\R^+,\mu_\alpha)}
    \ge C_{s,\beta,\alpha} \norm{f}^{2}_{L^2(\R^+,\mu_\alpha)}.
  \end{equation*}
\end{theorem}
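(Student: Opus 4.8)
The plan is to recognize this statement as the specialization to $\tt=\ff_\alpha$ of the general global uncertainty principle already established as inequality \eqref{eqhe}. So the real work is simply to check that the Fourier--Bessel transform is one of the operators $\tt$ covered by the abstract framework of the introduction, after which the stated inequality follows verbatim. First I would fix the parameters: one takes $d=1$, the cones $\Omega=\widehat{\Omega}=\R^+$, and the measures $\mu=\widehat{\mu}=\mu_\alpha$, whose density $\frac{1}{2^\alpha\Gamma(\alpha+1)}x^{2\alpha+1}$ is exactly of the homogeneous polar form required, with $2a-1=2\alpha+1$, hence $a=\widehat{a}=\alpha+1$, and with $Q$ a constant on $\S^0\cap\R^+$. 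The kernel is $\kk(x,\xi)=j_\alpha(x\xi)$, which is continuous; and since $|j_\alpha|\le1$, it is polynomially bounded with $c_\tt=1$ and $m=\widehat{m}=0$.

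Next I would verify the remaining structural hypotheses. The only one that is not entirely automatic is the homogeneity condition $\kk(\lambda x,\xi)=\kk(x,\lambda\xi)$, but this holds trivially because $j_\alpha(x\xi)$ depends on $x$ and $\xi$ only through their product $x\xi$, so both sides equal $j_\alpha(\lambda x\xi)$. Since $\ff_\alpha$ extends to an isometric isomorphism of $L^2(\R^+,\mu_\alpha)$ onto itself, Plancherel's theorem holds in the required form. Thus every assumption underlying the derivation of \eqref{eqhe} is satisfied by $\ff_\alpha$.

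With these identifications in place, I would invoke the general global uncertainty principle \eqref{eqhe} with $\tt=\ff_\alpha$, $m=\widehat{m}=0$ and $a=\alpha+1$. This produces a constant $C=C(s,\beta,\alpha+1)$, which I rename $C_{s,\beta,\alpha}$, and the desired inequality is precisely the conclusion. There is essentially no obstacle to overcome here: the entire analytic content --- the reduction via Corollary \ref{str} to the weak annihilating property of Theorem \ref{weak}, and the scaling argument using the dilation relation \eqref{dt} to optimize over $\lambda>0$ --- has already been carried out at the abstract level, so the proof reduces to the bookkeeping of confirming that the Fourier--Bessel transform falls under that theory.
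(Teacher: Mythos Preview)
Your proposal is correct and is precisely the paper's intended argument: the theorem is not given a separate proof in the paper but is simply stated as the specialization of the general inequality \eqref{eqhe} to $\tt=\ff_\alpha$, after having noted that $|j_\alpha|\le 1$, $\ff_\alpha^{-1}=\ff_\alpha$, and Plancherel holds, so that all the hypotheses of the abstract framework are met with $a=\alpha+1$, $m=\widehat m=0$, $c_\tt=1$. The only hypothesis you do not spell out is the Inversion Formula, but since $j_\alpha$ is real and $\ff_\alpha$ is its own inverse this is immediate.
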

The case when $s=\beta=1$ has been established in \cite{Bo, rousler} with the optimal constant $C_{1,1,\alpha}=\alpha+1$.

\subsection{The Fourier-Dunkl transform}\

In this section we will deduce new uncertainty principles for the Dunkl transform.
Uncertainty principles for this  transformation have been considered in various places, {\it e.g.} \cite{Rm, sim} for a Heisenberg type inequality or
\cite{GT} for Hardy type uncertainty principles  and recently \cite{cdkm, maj} for a generalization and a variant of Cowling-Price's theorem, Beurling's theorem,
 Miyachi's theorem and Donoho-Stark's uncertainty principle.

Let us fix some notation and present some necessary material on the Dunkl transform.
Let $G$ be a finite reflection group on $\R^d$, associated with a root system $R$ and $R_+$ the positive
subsystem of $R$ ({\it see} \cite{dje, Du, RV}). We denote by $k$ a nonnegative multiplicity function defined on $R$
with the property that $k$ is $G$-invariant. We associate with $k$ the index
$$
\gamma:=\gamma(k)= \sum _{\xi \in R_+} k(\xi)\ge0
$$
and the weight function $w_k$ defined by
$$
w_k(x)=\prod _{\xi \in R_+} |\scal{\xi ,x}|^{2 k(\xi)}.
$$
Further we introduce the Mehta-type constant $c_k$ by
$$
c_k= \left(\int_{\R^d} e^{-\frac{\abs{x}^2}{2}}\d \mu_k(x)\right)^{-1},
$$
where\footnote{we chose here to stick to the notation that is usual in Dunkl analysis rather than that of the previous section in which $\mu_k$ is simply denoted by $\mu$.} $\d \mu_k(x)= w_k(x) \d x$. Moreover
$$
\int_{\S^{d-1}} w_k(x) \d \sigma (x)= \dst\frac{c_k^{-1}}{2^{\gamma+d/2-1}\Gamma(\gamma+d/2)}=d_k.
$$

By using the homogeneity of $w_k$ it is shown in \cite{RV} that for a radial function $f\in L^1(\R^d,\mu_k)$
the function $\widetilde{f}$ defined on $\R^+$ by $f(x) =\widetilde{f}(\abs{x})$, for all $x \in \R^d$ is integrable with respect to the measure
 $r^{2\gamma+d-1}\d r$. More precisely,
\begin{eqnarray} \label{radial}
 \int_{\R^d} f(x) w_k(x) \d x &=&  \int_{\R^+} \left(\int_{\S^{d-1}} w_k(ry) \d \sigma (y)\right) \widetilde{f}(r)r^{d-1}\d r \nonumber\\
&=&  \dst d_k\int_{\R^+} \widetilde{f}(r)r^{2\gamma+ d-1}\d r.
\end{eqnarray}

Introduced by C.\,F. Dunkl in \cite{D}, the Dunkl operators $T_j$, $1 \leq j \leq d$ on $\R^d$ associated with
the reflection group $G$ and the multiplicity function $k$ are the first-order differential-difference
operators given by
$$
T_j f(x)= \frac{\partial f}{\partial x_j}+ \sum_{\xi \in R_+} k(\xi) \xi_j \frac{f(x)- f(\sigma_{\xi} (x))}{\scal{\xi,x}},
\hspace{0,5cm} x \in \R^d;
$$
where $f$ is an infinitely differentiable function on $\R^d$, $\xi_j= \scal{\xi,e_j}$, $(e_1, \ldots, e_d)$ being the canonical basis of $\R^d$ and  $\sigma_{\xi}$
 denotes the reflection with respect to the hyperplane orthogonal to $ \xi$.

The Dunkl kernel $\kk_k$ on $\R^d \times \R^d$ has been introduced by C.\,F. Dunkl in \cite{Du}. For $y \in \R^d$ the
function $x \mapsto\kk_k(x, y)$ can be viewed as the solution on $\R^d$ of the following initial problem
$$
T_ju(x,y) = y_j u(x,y); \hspace{0,25cm} 1 \leq j \leq d, \hspace{0,5cm} u(0,y)=1.
$$
This kernel has a unique holomorphic extension to $\C^d \times \C^d$. M. R\"{o}sler has proved in \cite{R} the
following integral representation for the Dunkl kernel
$$
\kk_k(x,z)= \int_{\R^d} e^{\scal{y,z}}\d \mu_x^k (y), \hspace{0,5cm}x \in \R^d, \hspace{0,5cm} z \in \C^d;
$$
where $\mu_x^k$ is a probability measure on $\R^d$ with support in the closed ball $B_{\abs{x}}$.
We have ({\it see} \cite{R}) for all $\lambda \in \C$, $z,\; z'\in \C^d$ and $x, \;y \in \R^d$
$$
\kk_k(z, z')= \kk_k(z', z),\hspace{0,25cm} \kk_k(\lambda z, z') = \kk_k(z, \lambda z'),\hspace{0,25cm}\overline{\kk_k(-iy, x)}=\kk_k(iy, x),\hspace{0,25cm} |\kk_k(-iy, x)|\leq 1.
$$
The Dunkl transform $\ff_k$ of a function $f \in L^1(\R^d,\mu_k)\cap L^2(\R^d,\mu_k)$ which was introduced by C. F. Dunkl ({\it see} \cite{dje, Dun}), is given by
$$
\ff_k (f)(\xi):=c_k \int_{\R^d}\kk_k(-i\xi, x) f(x)\d \mu_k(x),  \hspace{0,5cm} \xi \in \R^d;
$$
and extends uniquely to an isometric isomorphism on $L^2(\R^d,\mu_k)$ with $\ff_k^{-1}(f)(\xi)=\ff_k (f)(-\xi)$.

The Dunkl transform  $\ff_k$ provides a natural generalization of the  Fourier transform $\ff$, to which it reduces in
the case $k =0$, and if $f(x)=\widetilde{f}(|x|)$
is a radial function on $\R^d$, then $$\ff_k (f)(\xi)=\ff_{\gamma+d/2-1}(\widetilde{f})(|\xi|),$$
where $\ff_{\gamma+d/2-1}$ is the Fourier-Bessel transform of index $\gamma+d/2-1$.

Now if we take $c_{\tt}=c_k$, $a=\gamma+ d/2$ and $m=\widehat{m}=0$, then  from Section $2$ and $3$ we obtain a new
uncertainty principles for the Dunkl transform $\ff_k$.

\begin{theorem}[Donoho-Stark's uncertainty principle for $\ff_k$]\ \\
Let $S$, $\Sigma$  be a pair of measurable subsets of $\R^d$. If $f\in L^2(\R^d,\mu_k)$ of unit $L^2$-norm is $\eps_1$-concentrated on $S$ and
$\eps_2$-bandlimited on $\Sigma$ for the Dunkl  transform, then
\begin{equation}\label{eqdsdun}
\mu_k(S)\mu_k(\Sigma)\ge c_k^{-2}\left(1-\sqrt{\eps_1^2+\eps_2^2}\right)^2.
\end{equation}
\end{theorem}

Note that the Donoho-Stark's uncertainty principle has recently been proved in \cite{maj} for the Dunkl transform but our
inequality \eqref{eqdsdun} is a little stronger.

Let us now state how our results translate to the Fourier-Dunkl transform. These results are new to our knowledge.

\begin{theorem}\ \\
Let $S$, $\Sigma$ be a pair of measurable subsets of $\R^d$ with finite measure, $0<\mu_k(S),\, \mu_k(\Sigma)<\infty$.
Then the following uncertainty principles hold.

\begin{enumerate}
\item \emph{Local uncertainty principle for $\ff_k$:}
\begin{enumerate}
  \item For $0 <s < \gamma+ d/2$, there is a constant $c(s,k)$ such that for all $f\in L^2(\R^d,\mu_k)$,
 $$
\norm{\ff_k(f)}_{L^2(\Sigma, \mu_k)} \leq c(s,k) \Big[\mu_k(\Sigma)\Big]^{\frac{s}{2\gamma+d}}\norm{\abs{x}^s f}_{L^2(\R^d,\mu_k)}.
$$
\item For $s > \gamma+ d/2$, there is a constant $c'(s,k)$ such that for all $f\in L^2(\R^d,\mu_k)$,
$$
\norm{\ff_k(f)}_{L^2(\Sigma,, \mu_k)} \leq c'(s,k)\Big[\mu_k(\Sigma)\Big]^{\frac{1}{2}}\norm{f}_{L^2(\R^d,\mu_k)}^{1-\frac{2\gamma+d}{2s}}
\norm{\abs{x}^s f}_{L^2(\R^d,\mu_k)}^{\frac{2\gamma+d}{2s}}.
$$
\end{enumerate}

\item \emph{Benedicks-Amrein-Berthier's uncertainty principle for $\ff_k$:} \\
There exists a constant $C_k(S,\Sigma)$ such that for all $f\in L^2(\R^d,\mu_k)$,
\begin{equation*}
 \|f\|^2_{L^2(\R^d,\mu_k)}\le C_k(S,\Sigma)\Big(\|f\|^2_{L^2(S^c,\mu_k)} +\|\ff_k(f)\|^2_{L^2(\Sigma^c,\mu_k)} \Big).
\end{equation*}

\item \emph{Global uncertainty principle for $\ff_k$:} \\
For $s,\;\beta>0$, there exists a constant $C_{s,\beta,k}$ such that for all $f\in L^2(\R^d,\mu_k)$,
$$
    \big\||x|^{s}f\big\|^{\frac{2\beta}{s+\beta}}_{L^2(\R^d,\mu_k)} \; \big\||\xi|^{\beta}\ff_k(f)\big\|^{\frac{2s}{s+\beta}}_{L^2(\R^d,\mu_k)}
    \ge C_{s,\beta,k} \norm{f}^{2}_{L^2(\R^d,\mu_k)}.
$$
\end{enumerate}
\end{theorem}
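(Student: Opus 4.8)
The plan is to recognize the Dunkl transform $\ff_k$ as a particular instance of the abstract integral operator $\tt$ studied in Sections~2 and~3, so that all three assertions follow by specializing the corresponding general results. Accordingly, I would first fix the dictionary between the two settings: take $\Omega=\widehat{\Omega}=\R^d$, $\mu=\widehat{\mu}=\mu_k$, kernel $\kk(x,\xi)=c_k\,\kk_k(-i\xi,x)$, and constants $c_\tt=c_k$, $a=\widehat{a}=\gamma+d/2$, and $m=\widehat{m}=0$. With this identification the operator defined by \eqref{defop} is exactly $\ff_k$.

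The bulk of the work is then the routine verification that these data meet the standing hypotheses. The continuity of $\kk$ follows from the holomorphy of the Dunkl kernel on $\C^d\times\C^d$; the bound $|\kk_k(-iy,x)|\le 1$ gives $|\kk(x,\xi)|\le c_k$, which is precisely the polynomial bound with $m=\widehat{m}=0$; and the scaling relation $\kk_k(\lambda z,z')=\kk_k(z,\lambda z')$ yields $\kk(\lambda x,\xi)=\kk(x,\lambda\xi)$, \ie the required homogeneity of the kernel. For the measure, the homogeneity $w_k(r\zeta)=r^{2\gamma}w_k(\zeta)$ combined with the polar decomposition of Lebesgue measure gives $\d\mu_k(r\zeta)=r^{2\gamma+d-1}\,\d r\,w_k(\zeta)\,\d\sigma(\zeta)$, so that $2a=2\gamma+d$ and $Q=w_k|_{\S^{d-1}}$; the integrability $Q\in L^1(\S^{d-1},\d\sigma)$ is exactly the finiteness of $d_k$ recorded after \eqref{radial}. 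Finally, the Inversion Formula for $\tt$ is the relation $\ff_k^{-1}(f)(\xi)=\ff_k(f)(-\xi)$ together with the symmetries $\overline{\kk_k(-iy,x)}=\kk_k(iy,x)$ and $\kk_k(z,z')=\kk_k(z',z)$, while Plancherel's Theorem for $\tt$ is the statement that $\ff_k$ is an isometric isomorphism of $L^2(\R^d,\mu_k)$.

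Once the hypotheses are checked, each part is immediate. Part~(1) is Theorem \ref{prop:local.} with $m=\widehat{m}=0$ and $a=\gamma+d/2$; here the vanishing of $m$ collapses the two subcases $\widehat{\mu}_{2\widehat{m}}(\Sigma)\le 1$ and $>1$ into a single clean estimate (since $\frac{s}{2(a+m)}=\frac{s}{2a}=\frac{s}{2\gamma+d}$), and the middle range $a\le s\le a+m$ of Theorem \ref{prop:local.} collapses to the excluded value $s=a$, leaving precisely the two stated regimes with $\frac{a}{s}$ reading $\frac{2\gamma+d}{2s}$. Part~(2) is Corollary \ref{str} applied to $\tt=\ff_k$, and Part~(3) is the global inequality \eqref{eqhe}. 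I expect no genuine mathematical obstacle: the only steps demanding care are the bookkeeping ones, namely placing the Mehta normalization $c_k$ inside the kernel so that $c_\tt=c_k$, and writing the polar decomposition of $\mu_k$ in exactly the form $\d\mu(r\zeta)=r^{2a-1}\,\d r\,Q(\zeta)\,\d\sigma(\zeta)$ demanded by the framework, both of which are furnished directly by \eqref{radial}.
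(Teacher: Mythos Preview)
Your proposal is correct and follows exactly the paper's approach: the paper simply states ``if we take $c_{\tt}=c_k$, $a=\gamma+ d/2$ and $m=\widehat{m}=0$, then from Section~2 and~3 we obtain new uncertainty principles for the Dunkl transform $\ff_k$,'' and then records the theorem without further argument. Your write-up is in fact more detailed than the paper's, since you explicitly verify the hypotheses on the kernel and the polar decomposition of $\mu_k$ that the paper leaves implicit.
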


A simple computation shows that
$$
c(s,k)=\dst\frac{2\gamma+d}{2\gamma+d-2s}\left[\frac{c_k}{2s} \sqrt{(2\gamma+d-2s)d_k}\right]^{\frac{2\gamma+d}{2s}}
$$
and
$$
c'(s,k)=c_k \left[\frac{d_k}{2\gamma+d}\left(\frac{2s}{2\gamma+d}-1 \right)^{\frac{2\gamma+d}{2s}-1}
\Gamma\left(\frac{2\gamma+d}{2s}\right)\Gamma\left(1-\frac{2\gamma+d}{2s}\right) \right]^{1/2}.
$$

In the particular case $s=\beta=1$ for the global uncertainty principle, we recover
Heinsenberg's inequality for the Dunkl transform but with $C_{1,1,k}\le \gamma+d/2$, where $\gamma+d/2$ is
the optimal constant in the 
Heisenberg uncertainty principle given in \cite{Rm, sim}.

\subsection{The Fourier-Clifford transform}\

Let us now introduce the basics of Clifford analysis that are needed to introduce the Fourier-Clifford transform.
Facts used here can be found {\it e.g.} in \cite{Br1,Br2}. We also follow as closely as possible the presentation of Clifford analysis from \cite{BSS, dbie}.

Throughout this section $d\geq 2$ will be a fixed integer
and the measure $\mbox{d}\mu(x)=\mbox{d}\widehat{\mu}(x)=(2\pi)^{-d/2}\mbox{d}x$ is the Lebesgue measure on $\R^d$.
We first associate the Clifford algebra $Cl_{0,d}(\C)$ generated by the canonical basis $e_j$, $j=1,\ldots,d$.
For $A=\{j_1,j_2,\ldots,j_k\}\subset\{1,\ldots,d\}$ with $j_1<j_2<\cdots<j_k$, we denote by
$e_A=e_{j_1}e_{j_2}\cdots e_{j_k}$.
The basis of the Clifford algebra is then given by $\ee=\bigl\{e_A, A\subset\{1,\ldots,d\}\bigr\}$.
The Clifford algebra is then the complex vector space generated by $\ee$ endowed with the multiplication rule given by
\begin{enumerate}
\renewcommand{\theenumi}{\roman{enumi}}
\item $e_\emptyset=1$ is the unit element

\item $e_j^2=-1$, $j=1,\ldots,d$

\item $e_je_k+e_ke_j=0$, $j,k=1,\ldots,d$, $j\not=k$.
\end{enumerate}

Conjugation is defined by the anti-involution for which $\overline{e_j}=-e_j$, $j=1,\ldots,d$
with the additional rule $\bar i=-i$.

The scalars are then identified with $\span\{e_\emptyset\}$ while we identify a vector $x=(x_1,\ldots,x_d)$
with
$$
\underline{x}=\sum_{j=1}^d e_jx_j.
$$
The product of two vectors splits into a scalar part and a \emph{bivector} part
$$
\ux \uy=-\scal{\ux,\uy}+\ux\wedge\uy
$$
and
$$
\ux\wedge\uy=\sum_{j=1}^d\sum_{k=i+1}^d e_je_k(x_jy_k-x_ky_j).
$$
Note that $\ux^2=-|x|^2$.

The functions defined in this section are defined on $\R^d$ and take their values in the Clifford algebra $Cl_{0,d}(\C)$. We can now introduce the so-called Dirac operator, a first order vector differential operator defined by
$$
\partial_{\ux}=\sum_{j=1}^d\partial_{x_j}e_j.
$$
Its square equals, up to a minus sign, the Laplace operator on $\R^d$, $\partial_{\ux}^2=-\Delta$.
The central notion in Clifford analysis is the notion of \emph{monogenicity}, the higher-dimensional analogue of
holomorphy: a function is called (left)-monogenic if $\partial_{\ux}f=0$.

We will denote by $\mm_k$ the space of all \emph{spherical monogenics} of degree $k$, that is, homogeneous polynomials
of degree $k$ that are null-solutions of the Dirac operator. We fix a basis
$\{M_k^{(\ell)}\}_{\ell=1,2,\ldots,\dim\mm_k}$ of $\mm_k$. Further, the Laguerre polynomials are
denoted by $L_j^\alpha$. We then consider the following functions, called the \emph{Clifford-Hermite} functions
\begin{equation}
\begin{matrix}
\psi_{2j,k,\ell}(\ux)&=&\gamma_{2j,k,\ell}L_j^{\frac{d}{2}+k-1}(|\ux|^2)M_k^{(\ell)}(\ux)e^{-|\ux|^2/2}\\
\psi_{2j+1,k,\ell}(\ux)&=&\gamma_{2j+1,k,\ell}L_j^{\frac{d}{2}+k}(|\ux|^2)\ux M_k^{(\ell)}(\ux)e^{-|\ux|^2/2}
\end{matrix},
\end{equation}
where $j,k\in\N$ and $\ell\in\{1,\ldots,\dim\mm_k\}$.
Provided the $\gamma_{j,k,\ell}$'s are properly chosen, this is an \emph{orthonormal basis} of $L^2(\R^d)$
({\it see} \cite{BdSKS}).

Next, introducing spherical coordinates in $\R^d$: $\ux=r\underline{\omega}$, $r=|\ux|\in\R^+$,
$\underline{\omega}\in\S^{d-1}$, the Dirac operator takes the form
$$
\partial_{\ux}=\underline{\omega}\Bigl(\partial_r+\frac{1}{r}\Gamma_{\ux}\Bigr)
$$
where
$$
\Gamma=\ux\wedge \partial_{\ux}=-\sum_{j=1}^d\sum_{k=j+1}^d e_je_k(x_j\partial_{x_k}-x_k\partial_{x_j})
$$
is the so-called \emph{angular Dirac operator}.

We are now in position to define the Clifford-Fourier transforms on $\ss(\R^d)$. This can be done in three equivalent
ways:

-- $\ff_{\pm}[f]=e^{i d\frac{\pi}{4}}e^{i\frac{\pi}{4}(\Delta-|\ux|^2\mp2\Gamma)}f$;

-- via an integral kernel
$$
\ff_{\pm}[f](\underline{\eta})=\int_{\R^d}f(\ux)K_\pm(\ux,\underline{\eta})\,\mbox{d}\mu(\ux)
$$
where $K_\pm(\ux,\underline{\eta})=\dst
e^{id\frac{\pi}{4}}e^{i\frac{\pi}{2}\Gamma_{\underline{\eta}}}e^{-i\scal{\ux,\underline{\eta}}}$;

-- via its eigenfunctions
$$
\ff_{\pm}[\psi_{2j,k,\ell}]=(-1)^{j+k}(\mp 1)^k\psi_{2j,k,\ell}
\quad\mbox{and}\quad
\ff_{\pm}[\psi_{2j+1,k,\ell}]=i^d(-1)^{j+1}(\mp 1)^{k+d-1}\psi_{2j,k,\ell}.
$$

The third definition immediately shows that $\ff_\pm$ extend to unitary operators on $L^2(\R^d,\mu)$.

The fact that the integral operator definition makes sense on $\ss(\R^d)$ and that the
kernel of the inverse transform is indeed $\overline{K_\pm(\ux,\underline{\eta})}$
has been proved respectively in \cite[Theorem 6.3 and Proposition 3.4]{dbie}.

Finally, the kernel is not known to be polynomially bounded, excepted when the dimension $d$ is even
\cite[Theorem 5.3]{dbie} and then
$$
|K(\ux,\underline{\eta})|\leq C(1+|\ux|)^{(d-2)/2}(1+|\underline{\eta}|)^{(d-2)/2}.
$$
Thus $m=\widehat{m}=(d-2)/2$, $c_\tt=C$ and $a=d/2$.

It remains to notice that all results from the first part of the paper extend with no change to Clifford-valued functions. More precisely,
 we obtain the following results:

\begin{theorem}\ \\
Let $d$ be even and $\d\nu(x)=\dst(1+|x|)^{d-2}\d \mu(x)$.
Let $S$, $\Sigma$ be a pair of measurable subsets of $\R^d$. Then
the Clifford-Fourier transform satisfies the following uncertainty principles.

\begin{enumerate}
\item \emph{Donoho-Stark's uncertainty principle for $\ff_\pm$:}\\
 If $f\in L^2(\R^d, \mu)$ of unit $L^2$-norm is $\eps_1$-concentrated on $S$ and
$\eps_2$-bandlimited on $\Sigma$ for the Clifford-Fourier transform, then
\begin{equation*}
\nu(S)\nu(\Sigma)\ge C^{-2}\left(1-\sqrt{\eps_1^2+\eps_2^2}\right)^2.
\end{equation*}
\item \emph{Local uncertainty principle for $\ff_\pm$:} \\
If $\Sigma$ is subset of finite measure $0< \nu(\Sigma)< \infty$, then
\begin{enumerate}
  \item for $\,0 <s <d/2$, there is a constant $c(s)$ such that for all $f\in L^2(\R^d,\mu)$,
 \begin{equation*}
\norm{\ff_\pm(f)}_{L^2(\Sigma,\widehat{\mu})} \leq
\begin{cases} c(s) \Big[\nu(\Sigma)\Big]^{\frac{s}{2(d-1)}}\norm{\abs{x}^s f}_{L^2(\R^d, \mu)}
,&\mbox{if }\nu(\Sigma)\leq 1;\\
c(s)\Big[\nu(\Sigma)\Big]^{\frac{s}{d}}\big\|\abs{x}^s f\big\|_{L^2(\R^d, \mu)},&\mbox{if }\nu(\Sigma)> 1;
\end{cases}
\end{equation*}
\item for $\,d/2\leq s\leq d-1$ then, for every $\eps>0$ there is a constant $c(s,\eps)$ such that for all $f\in L^2(\R^d,\mu)$,
 \begin{equation*}
\norm{\ff_\pm(f)}_{L^2(\Sigma,\widehat{\mu})}
\leq
\begin{cases}
c(s, \eps)\Big[\nu(\Sigma)\Big]^{\frac{1}{4(1-1/d)}-\eps}\big\|f\big\|_{L^2(\R^d,\mu)}^{1-\frac{d}{2s}+\eps}
\big\|\abs{x}^s f\big\|_{L^2(\R^d,\mu)}^{\frac{d}{2s}-\eps}, &\mbox{if }\nu(\Sigma)\le 1;\\
c(s, \eps)\Big[\nu(\Sigma)\Big]^{\frac{1}{2}-\eps}\big\|f\big\|_{L^2(\R^d,\mu)}^{1-\frac{d}{2s}+\eps}
\big\|\abs{x}^s f\big\|_{L^2(\R^d,\mu)}^{\frac{d}{2s}-\eps}, &\mbox{if }\nu(\Sigma)> 1;
\end{cases}
\end{equation*}
\item for $  s > d-1$, there is a constant $c'(s)$ such that for all $f\in L^2(\R^d, \mu)$,
$$
\norm{\ff_\pm(f)}_{L^2(\Sigma, \mu)} \leq c'(s)\Big[\nu(\Sigma)\Big]^{\frac{1}{2}}\norm{(1+\abs{x}^s)f}_{L^2(\R^d, \mu)}.
$$
\end{enumerate}

\item \emph{Benedicks-Amrein-Berthier's uncertainty principle for $\ff_\pm$:}\\
 If $S,\,\Sigma$ are subsets of finite measure   $0<\nu(S),\nu(\Sigma)< \infty$, then
there exists a constant $C(S,\Sigma)$ such that for all $f\in L^2(\R^d, \mu)$,
\begin{equation*}
 \|f\|^2_{L^2(\R^d, \mu)}\le C(S,\Sigma)\Big(\|f\|^2_{L^2(S^c, \mu)} +\|\ff_\pm(f)\|^2_{L^2(\Sigma^c, \mu)} \Big).
\end{equation*}

\item \emph{Global uncertainty principle for $\ff_\pm$:}\\
For $s,\;\beta>0$, there exists a constant $C_{s,\beta}$ such that for all $f\in L^2(\R^d, \mu)$,
$$
    \big\||x|^{s}f\big\|^{\frac{2\beta}{s+\beta}}_{L^2(\R^d, \mu)} \; \big\||\xi|^{\beta}\ff_\pm(f)\big\|^{\frac{2s}{s+\beta}}_{L^2(\R^d, \mu)}
    \ge C_{s,\beta} \norm{f}^{2}_{L^2(\R^d, \mu)}.
$$
\end{enumerate}

\end{theorem}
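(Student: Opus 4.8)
The plan is to recognize the even-dimensional Clifford--Fourier transform $\ff_\pm$ as a special case of the integral operator $\tt$ of Sections 2 and 3 and then to apply the general results verbatim. First I would fix the data: $\Omega=\widehat\Omega=\R^d$ (a convex cone with nonempty interior), $\mu=\widehat\mu=(2\pi)^{-d/2}\d x$ the Lebesgue measure, which has the required polar form with $a=\widehat a=d/2$, and the kernel $\kk=K_\pm$. With $c_\tt=C$ and $m=\widehat m=(d-2)/2$ one computes $\d\mu_{2m}(x)=\d\widehat\mu_{2\widehat m}(x)=(1+\abs{x})^{d-2}\d\mu(x)=\d\nu(x)$, so the weight $\nu$ of the statement is exactly the measure $\mu_{2m}=\widehat\mu_{2\widehat m}$ occurring in the abstract theorems. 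It then remains to check the three structural conditions on $K_\pm$: continuity is clear from its explicit form; the polynomial bound $\abs{K_\pm(\ux,\underline{\eta})}\le C(1+\abs{\ux})^{(d-2)/2}(1+\abs{\underline{\eta}})^{(d-2)/2}$ is the estimate of \cite[Theorem 5.3]{dbie}, valid precisely because $d$ is even (this is the reason for the evenness hypothesis); and the homogeneity $K_\pm(\lambda\ux,\underline{\eta})=K_\pm(\ux,\lambda\underline{\eta})$ follows from the defining formula $K_\pm=e^{id\pi/4}e^{i\frac{\pi}{2}\Gamma_{\underline{\eta}}}e^{-i\scal{\ux,\underline{\eta}}}$ together with the scaling invariance of the angular Dirac operator $\Gamma$ and of the Euclidean pairing.

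Next I would verify the two analytic hypotheses. Plancherel's Theorem holds because the eigenfunction description sends the orthonormal basis of Clifford--Hermite functions $\psi_{n,k,\ell}$ to unimodular scalar multiples of basis elements, so $\ff_\pm$ extends to a unitary operator of $L^2(\R^d,\mu)$; in particular the relation $\widehat a=a$ is respected. The Inversion Formula, with kernel $\overline{K_\pm(\ux,\underline{\eta})}$, is \cite[Proposition 3.4]{dbie}. Hence $\ff_\pm$, defined on $\ss(\R^d)$ by \eqref{defop} and extended by unitarity, satisfies every hypothesis placed on $\tt$.

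The one genuinely new issue --- and the step I expect to demand the most care --- is that the functions here are $Cl_{0,d}(\C)$-valued rather than scalar. I would dispose of it as follows. Writing $f=\sum_A f_A e_A$ in the orthonormal basis $\ee$ gives $\norm{f}^2_{L^2}=\sum_A\norm{f_A}^2_{L^2}$, so the $L^2$ space of Clifford-valued functions is a finite orthogonal sum of scalar $L^2$-spaces. Since $Cl_{0,d}(\C)$ is finite dimensional, its product is bounded, $\abs{ab}\le c_0\abs{a}\abs{b}$; consequently $\abs{f(\ux)K_\pm(\ux,\underline{\eta})}\le c_0C\abs{f(\ux)}(1+\abs{\ux})^m(1+\abs{\underline{\eta}})^{\widehat m}$, and the continuity of $\tt\colon L^1(\mu_m)\to\cc_{\widehat m}$ together with every $L^1$--$L^\infty$ estimate used in Theorem \ref{prop:local.} go through with $c_\tt$ replaced by $c_0C$. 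For the annihilating-pair results I would invoke the remark following Theorem \ref{weak}, which already records that the argument persists for operators valued in a finite-dimensional Banach algebra: the Hilbert--Schmidt bound \eqref{finite} and the finiteness of $\dim(\Im E_S\cap\Im F_\Sigma)$ are unchanged, and the linear independence over $\C$ of the dilates $\{\dd_{\lambda_i}f_0\}$ furnished by Lemma \ref{libre} follows by applying the scalar lemma to any nonvanishing component $f_{0,A}$.

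Finally I would read off the four conclusions. Donoho--Stark's inequality is \eqref{eqdseps} with $c_\tt=C$ and $\mu_{2m}=\widehat\mu_{2\widehat m}=\nu$. The three local estimates come from Theorem \ref{prop:local.} once one substitutes $a=d/2$ and $a+m=d-1$: the breakpoints $0<s<a$, $a\le s\le a+m$, $s>a+m$ become $0<s<d/2$, $d/2\le s\le d-1$, $s>d-1$, and the exponents $\tfrac{s}{2(a+m)}$, $\tfrac{s}{2a}$, $\tfrac{1}{2(1+m/a)}$, $\tfrac{a}{s}$ become $\tfrac{s}{2(d-1)}$, $\tfrac{s}{d}$, $\tfrac{1}{4(1-1/d)}$, $\tfrac{d}{2s}$ (in the range $s>d-1$ the weight $\norm{f}_{L^2(\mu_{2s})}$ is comparable to $\norm{(1+\abs{x}^s)f}_{L^2(\mu)}$). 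The Benedicks--Amrein--Berthier statement is Corollary \ref{str}, and the global inequality is the global corollary establishing \eqref{eqhe}, in both of which $\mu_{2m}$ and $\widehat\mu_{2\widehat m}$ are $\nu$. The real obstacle throughout is not any individual estimate but confirming that the entire scalar development survives the passage to Clifford-valued functions.
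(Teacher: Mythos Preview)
Your proposal is correct and follows precisely the approach of the paper: identify the parameters $a=d/2$, $m=\widehat m=(d-2)/2$, $c_\tt=C$, note that $\mu_{2m}=\widehat\mu_{2\widehat m}=\nu$, and then invoke the general results of Sections~2 and~3, observing that they extend to Clifford-valued functions. The paper's own proof is a single sentence (``It remains to notice that all results from the first part of the paper extend with no change to Clifford-valued functions''), whereas you spell out explicitly why each hypothesis on $\tt$ is met and how the passage to $Cl_{0,d}(\C)$-valued functions is handled---your treatment is more detailed but the strategy is identical.
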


\end{document}